\documentclass[runningheads,11pt]{llncs}
\usepackage{amssymb}
\usepackage{amsmath}
\usepackage{graphicx}
\usepackage{subfigmat}
\usepackage{url}
\usepackage{amsfonts}
\usepackage{latexsym} 
\usepackage{wasysym}
\usepackage{newlfont}
\usepackage{leqno}
%\usepackage[notref]{showkeys}

% \usepackage{lineno}
% \usepackage[usenames]{color}
% % \linenumbers
%\linespread{2.}

\usepackage{stmaryrd}

%\linenumbers
\setcounter{secnumdepth}{3}

% Gismo 
\usepackage{xspace}
\newcommand{\gismo}{{\fontfamily{phv}\fontshape{sc}\selectfont G\pmb{+}Smo}\xspace}
%-----------------------------------------------------------------

\newtheorem{assume}{Assumption}%[section]
%\theoremstyle{assume}{Assumption}[section]

%\newtheorem{lem}[lem]{Lemma}[section]
%\theoremstyle{lem}[thm]{Lemma}[section]
%\newtheorem{prop}[thm]{Proposition}
%\newstytheorem{proposition}{\bf}{\it}[theorem]{Proposition}
%\theoremstyle{definition}

% \theoremstyle{remark}

\numberwithin{equation}{section}
%------------------------------------------
    \urldef{\mailsa}\path|ulrich.langer@assoc.oeaw.ac.at|
    \urldef{\mailsb}\path|angelos.mantzaflaris@oeaw.ac.at|
    \urldef{\mailsc}\path|stephen.moore@ricam.oeaw.ac.at|
    \urldef{\mailsd}\path|ioannis.toulopoulos@oeaw.ac.at|
%\urldef{\mailsc}\path|anna.kramer,erika.siebert-cole,lncs}@springer.com|
    \newcommand{\keywords}[1]{\par\addvspace\baselineskip  
     \noindent\keywordname\enspace\ignorespaces#1}
%%%%%%%%--------------------------------------------------
\usepackage{tcolorbox}% http://ctan.org/pkg/tcolorbox
\definecolor{mycolor}{rgb}{0.122, 0.435, 0.698}% Rule colour
\makeatletter
\newcommand{\mybox}[1]{%
  \setbox0=\hbox{#1}%
  \setlength{\@tempdima}{\dimexpr\wd0+13pt}%
  \begin{tcolorbox}[colframe=mycolor,boxrule=0.5pt,arc=4pt,
      left=6pt,right=6pt,top=6pt,bottom=6pt,boxsep=0pt,width=\@tempdima]
    #1
  \end{tcolorbox}
}
% \makeatother
%------------------------------------------

\begin{document}
%\linenumbers
%\linespread{2.}
\mainmatter  % start of an individual contribution

% first the title is needed
%\title{ Graded mesh refinement in Isogeometric Analysis  for
%       solving elliptic problems with singular solutions }
%\title{ Mesh Grading in Isogeometric Analysis  for
%       Solving Elliptic Problems with Singular Solutions }
\title{Mesh Grading in Isogeometric Analysis}
% a short form should be given in case it is too long for the running head
%\titlerunning{graded meshes in  dG IgA for Elliptic problems}
\titlerunning{Mesh Grading in  dG IgA for Elliptic problems}
% the name(s) of the author(s) follow(s) next

\author{
%\thanks{Please note that the LNCS Editorial assumes that all authors have used
%the western naming convention, with given names preceding surnames. This determines
%the structure of the names in the running heads and the author index.}%
        Ulrich Langer
  \and  Angelos Mantzaflaris
  \and  Stephen E. Moore
  \and  Ioannis Toulopoulos
%\and Michael R\r{u}\v{z}i\v{c}ka$^1$
%\and Ioannis Toulopoulos$^1$
}
%\authorrunning{ U. Langer, A. Mantzaflaris, S. E. Moore,  I. Toulopoulos }
% (feature abused for this document to repeat the title also on left hand pages)

% the affiliations are given next; don't give your e-mail address
% unless you accept that it will be published
%\institute{Johann Radon Institute for Computational and Applied Mathematics (RICAM),
%Austrian Academy of Sciences, Altenberger Stra\ss{}e 69, 4040 Linz, Austria 
% \mailsa \\
% \mailsb \\
% \mailsc \\
% \mailsd   
%\email{ \{ulrich.langer, angelos.mantzaflaris, stephen.moore, ioannis.toulopoulos\}@oeaw.ac.at@ricam.oeaw.ac.at}
%}
%\maketitle

\author{
%\thanks{Please note that the LNCS Editorial assumes that all authors have used
%the western naming convention, with given names preceding surnames. This determines
%the structure of the names in the running heads and the author index.}%
U. Langer %$^1$ 
  \and  A. Mantzaflaris %$^1$
  \and  St. E. Moore %$^1$
\and   I. Toulopoulos %$^1$
%Ulrich Langer$^1$ 
%  \and  Angelos Mantzaflaris$^1$
%  \and  Stephen E. Moore$^1$
%\and   Ioannis Toulopoulos$^1$
%\and Michael R\r{u}\v{z}i\v{c}ka$^1$
%\and Ioannis Toulopoulos$^1$
}
\authorrunning{ U. Langer, A. Mantzaflaris, S. E. Moore,  I. Toulopoulos }
% (feature abused for this document to repeat the title also on left hand pages)

% the affiliations are given next; don't give your e-mail address
% unless you accept that it will be published
\institute{ %$^1$ 
Johann Radon Institute for Computational and Applied Mathematics (RICAM)\\ 
		  of the Austrian Academy of Sciences\\
		  Altenbergerstr. 69, A-4040 Linz, Austria \\
\mailsa \\
\mailsb \\
\mailsc \\
\mailsd   
 }
\maketitle

%-------------------------------------------------------------------------------------
\begin{abstract}
%This paper is concerned with the construction  of 
%graded meshes in discontinuous Galerkin Isogeometric Analysis (dG IgA) context 
%for solving linear elliptic problems 
%with singular  solutions,
%as for example problems in domains with re-entrant corners on the  boundary.
%Making use of the a priori known  analytical behavior of the solution, 
%the graded mesh is constructed in the neighborhoods of
%the singular points following a subdomain (patch wise) character. 
%Then the proof of the  optimal approximation order of the method is given,
%when the graded meshes are used. 
%Representative numerical examples are performed in order  
%to certificate  the  theoretical  convergence rates and to
%demonstrate the efficiency of the whole  method on solving more realistic  problems 
%in non smooth domains. 

This paper  %deals with 
is concerned with the construction of graded meshes 
for approximating so-called singular solutions 
of elliptic boundary value problems by means of 
%multidomain 
multipatch
discontinuous Galerkin Isogeometric Analysis schemes.
Such solutions appear, for instance, 
in domains with re-entrant corners on the  boundary 
of the computational domain, 
in problems with changing boundary conditions, 
in interface problems, 
or in problems with singular source terms.
Making use of the 
%a priori known  analytical 
analytic behavior of the solution, 
we construct the graded meshes in the neighborhoods of
such singular points following a 
%multidomain 
multipatch
approach. 
%subdomain (patch-wise) 
%character.
We prove that 
%the
appropriately graded meshes lead to the same convergence  
rates 
%like 
as
in the case of smooth solutions with approximately 
the same number of degrees of freedom.
Representative numerical examples are studied in order  
to confirm the  theoretical  convergence rates
and to demonstrate the efficiency of the 
mesh grading technology in Isogeometric Analysis.

%\keywords{ Elliptic problems with discontinuous coefficients, Isogeometric Analysis discretization, 
%singular solutions, domains with geometric singular points-edges, 
%graded meshes, optimal convergence rates}

\keywords{Elliptic boundary value problems,
domains with geometric singular points or edges,
discontinuous coefficients,
isogeometric analysis, 
mesh grading,
%graded meshes, 
recovering optimal convergence rates}
\end{abstract}
%%%%%%%%%%%%%%%%%%%%%%%%%%%%%%%%%%%%%%%%%%%%%%%%%%%%%%%%%%%%%%%%%%%%%%%%%%%%%%%%%%%%%%

\section{Introduction}
The gradient of the solution of elliptic boundary value problems 
can exhibit
singularities in the  vicinities of re-entrant corners or edges.
The same is true in case of changing boundary conditions or interface problems.
This singular behavior of the gradients was discovered and analyzed 
in the famous work by Kondrat'ev \cite{LMMT:Kondratev:1967a}.
We refer the reader to the monographs \cite{LMMT:Grisvard:1985a,LMMT:Grisvard:1992a,LMMT:KozlovMazyaRossmann2001}
for a more recent and comprehensive presentation of related results.
It is well known that these singularities may cause 
loss in
%of 
the approximation order of the standard discretization methods 
like the finite element method, 
see the classical monograph 
\cite{LMMT:StrangFix:1973a}
or the more recent paper
\cite{LMMT:ApelSandingWhiteman:1996}. 
In the case of  two dimensional problems with singular boundary  points, 
grading mesh  techniques have been 
developed for finite element methods in order to recover the 
%optimal 
full approximation order, see the classical textbook
%work by 
\cite{LMMT:OganesjanRuchovetz:1979a}
and the more recent publications 
\cite{LMMT:ApelSandingWhiteman:1996,LMMT:ApelMilde1996,LMMT:FeistauerSaendig:2012a}, and 
\cite{LMMT:ApelHeinrich:1994a} for three-dimensional problems.
%These techniques have been extended to three-dimensional problems
%in \cite{LMMT:ApelHeinrich:1994a}.  
Here, we devise graded meshes for solving elliptic problems with singular solutions
by means of discontinuous Galerkin Isogeometric Analysis method (dG IgA).

\par
In the IgA frame, the use of 
%superior 
B-splines or NURBS basis functions allow complicated CAD 
geometries to be exactly represented, and the key point of Hughes et al. \cite{LMMT:HughesCottrellBazilevs:2005a}
was to make use of the same basis to approximate the solution of the problem 
under consideration. Since this pioneer paper, 
applications of IgA method have been considered in many fields, see \cite{LMMT:Hughes_IGABook}.
Here, we apply a 
%multidomain Symmetric Discontinuous Galerkin Isogeometric Analysis  method,  
multipatch symmetric dG IgA method
which has been 
%widely 
extensively studied 
for diffusion problems in volumetric computational domains 
and on surfaces
in \cite{LMMT:LangerToulopoulos:2014a} 
and \cite{LMMT:LangerMoore:2014a}, respectively,
see also \cite{LMMT:LangerMantzaflarisMooreToulopoulos:2014a}
for comprehensive presentation.
The  solution of the problem is 
independently approximated 
%``independently''  
in every subdomain by IgA, 
without imposing any matching grid  conditions and without any  
continuity requirements for the discrete solution across the  subdomain interfaces.
Symmetrized          numerical fluxes with interior penalty  jump terms, 
see, e.g., \cite{LMMT:Dryja:2003a,LMMT:Riviere:2008a,LMMT:DiPietroErn:2012a}, 
        are introduced on the interfaces
         in order to treat the discontinuities of the discrete solution and to interchange information
         between the non matching grids.   As we will see later,  
     the consideration of the numerical scheme  in this general context
makes it more flexible to be applied on zone-type subdivisions of $\Omega$,  
which have been found to be quite convenient for treating elliptic 
boundary value problems
in domains with singular boundary points.   
%     
%     We prefer to  build up the numerical scheme  in this general frame, because we are more     
%     in realistic situations the multi-domain IgA approach becomes more flexible when ``non-conforming''
%     approximations-parametrizations are considered. 
%        Symmetrized  
%        numerical fluxes with interior penalty  jump terms, see \cite{LMMT:Riviere:2008a}, \cite{LMMT:Dryja:2003a}, 
%        are introduced on the interfaces
%         in order to treat the discontinuities of the discrete solution and to interchange information
%         between the non matching grids. %
\par 
This paper aims at the construction  of graded dG IgA meshes in the zones  located near
the  singular points      
in order to recover full 
convergence rates like in the case of smooth solutions on uniform meshes. 
The grading of the mesh 
 is mainly determined by 
the analytic behavior
 of the solution $u$ around the singular points 
 and follows the spirit of grading mesh techniques using layers, which have been proposed 
 for finite element methods  in 
\cite{LMMT:OganesjanRuchovetz:1979a,LMMT:ApelSandingWhiteman:1996,LMMT:ApelMilde1996}.
 According to this, having an  a priori knowledge about the location 
 of the singular point, e.g. the re-entrant corner, 
the domain $\Omega$ is
 subdivided into zones,
called layers in  \cite{LMMT:ApelSandingWhiteman:1996,LMMT:ApelMilde1996}, 
and  then a further subdivision of $\Omega$ into subdomains (also called patches in IgA), 
say $\cal{T}_{H}(\Omega):=\{\Omega_i\}_{i=1}^N$, is performed in such way 
that  $\cal{T}_{H}(\Omega)$ 
%to be in correspondence to 
is 
in correspondence with 
%in accordance with 
the initial zone partition.    
On the other hand, 
 the solution can be split into a sum of a
 regular part $u_r\in W^{l\geq 2,2}(\Omega)$ 
and a singular part $u_s \in W^{1+\varepsilon ,2}(\Omega)$,  
with known $\varepsilon \in (0,1)$, 
%that is 
i.e.,
$u=u_r+u_s$, see, e.g., \cite{LMMT:Grisvard:1985a}.
The analytical form of
 $u_s$ contains terms  with
singular exponents in the radial direction. 
 We use this  information and    construct
 appropriately graded  meshes in  the zones
  around the singular points. 
  The resulting graded meshes have  a ``zone-wise character'', 
  this means that the grid size of the graded mesh in every zone
 determines  the mesh of every subdomain which belongs into this zone, where 
  we assume that every subdomain
 belongs to only  one zone (the ideal situation is every zone to be a subdomain).
 We mention that the  mesh grading methodology is developed and is analyzed for the  classical  
 two dimensional problem  with a re-entrant  corner. 
 The proposed  methodology can be generalized and applied to other situations.
 This is shown by the numerical examples presented in Section~4. 
 
\par
The particular properties of the produced graded  meshes help
  us to  show optimal error estimates for the dG IgA method, 
 which exhibit optimal convergence rates.
 The error estimates for the proposed  method 
 are proved by using  a variation of C\'ea's Lemma and 
 using   B-spline quasi-interpolation estimates for
 $u\in W^{1,2}(\Omega)\cap W^{l\geq 2,p\in(1,2]}(\cal{T}_{H}(\Omega))$,  
 which have been proved  
 in \cite{LMMT:LangerToulopoulos:2014a}. More precisely, these interpolation estimates  have 
 subdomain character and  are expressed  with respect to the mesh size $h_i$ of the corresponding subdomain
 $\Omega_i$. For the domains away from the singular point,
 the solution is smooth (see $u_r$ part in previous splitting), 
and we can derive the usual interpolation estimates. 
Conversely, for the  subdomains $\Omega_i$, 
%where 
for which
the boundary  $\partial \Omega_i$ touches the singular point,
%the (any possible) singular function  $u_s$ is   considered  
%in  a $W^{2,p\in(1,2)}(\Omega_i)$ space. 
the singular part $u_s$ of the solution $u$ 
can be considered  as a function from the Sobolev space $W^{2,p\in(1,2)}(\Omega_i)$.
%
%UL: Terrible English !!!
%But the estimates given in \cite{LMMT:LangerToulopoulos:2014a} are introduced in a direct way 
%in the process of finding error estimates for $u_s$,   making in that way easier the whole error analysis. 
%This is a basic difference with the so far error  analysis of finite element methods 
%for this kind of problems, 
% for example see  
%\cite{LMMT:ApelSandingWhiteman:1996,LMMT:Apel_1999_Int_NonSmooth,LMMT:FeistauerSaendig:2012a}.  
%
Now the estimates given in \cite{LMMT:LangerToulopoulos:2014a} enable us to derive 
error estimates for the singular part $u_s$. 
This makes the whole error analysis easier 
in comparison with the techniques earlier developed 
for the finite element method, e.g., in
\cite{LMMT:ApelSandingWhiteman:1996,LMMT:Apel_1999_Int_NonSmooth,LMMT:FeistauerSaendig:2012a}.

\par
 %\textcolor{red}{Johannes: Prof LAnger, is this the best place to write the following???}\\
We mention that, in the literature, other IgA techniques have been proposed 
for solving two-dimensional problems with  singularities very recently.  
%For example, in 
%\cite{LMMT:OhKimJeong:2013a} and \cite{LMMT:JeongOhKangKim:2013a}, 
%the original  B-spline finite dimensional space has been  enriched by  generating  singular  
%functions which resemble the types of the singularities of the problem. 
In \cite{LMMT:OhKimJeong:2013a} and \cite{LMMT:JeongOhKangKim:2013a},
the mapping technique has been developed, where 
the original  B-spline finite dimensional space has been  enriched by  generating  singular  
functions which resemble the types of the singularities of the problem. 
The mappings
constructed on this enriched space describe the geometry singularities explicitly. 
Also in \cite{LMMT:SangalliaNurbs2012}, by studying the anisotropic character of the 
singularities of the problem, the one-dimensional approximation properties of
the B-splines are generalized for two-dimensional problems, 
in order to produce anisotropic refined meshes in the regions of the singular
points.

\par
The rest of the  paper is organized as follows. 
The problem description, the weak formulation and 
the dG IgA discrete analogue are presented in Section 2. 
Section 3 discusses  the construction of the 
appropriately graded IgA meshes, 
%technique, 
and provides the proof for obtaining  the full approximation order of 
the dG IgA method on the graded meshes.
Several two and three dimensional examples are presented in Section 4. 
%The paper closes with the Conclusion Section. 
Finally, we draw some conclusion.

\section{Problem description and dG IgA discretization}
\label{sec:ProblemDescriptionAndDiscretization}
First, let us introduce some notation. 
%For the \textit(multi-index)\\ 
%$\alpha=(\alpha_1,...,\alpha_d), {\ }\alpha_j\geq 0, j=1,...,d$, 
%with degree $|\alpha| = \sum_{j=1}^d\alpha_j$, 
%we define the \textit{differential operator}
We define the \textit{differential operator}
\begin{align}\label{00.0}
 D^a=D_1^{\alpha_1}\cdot\cdot\cdot D_d^{\alpha_d}, \text{with}
 {\ }D_j=\frac{\partial}{\partial x_j}, D^{(0,...,0)}u=u,
\end{align}
where $\alpha=(\alpha_1,...,\alpha_d)$, with $\alpha_j\geq 0, j=1,...,d$,
denotes a multi-index of the degree $|\alpha| = \sum_{j=1}^d\alpha_j$.
For a bounded Lipschitz domain $\Omega \subset \mathbb{R}^{d}$, $d=2,3$
we  denote by $W^{l,p}(\Omega)$, with $l \geq 1$  and $1\leq p \leq \infty$,
the usual Sobolev function spaces   endowed with the norms
\begin{subequations}
\begin{align}\label{00.1}
 \|u\|_{W^{l,p}(\Omega)} = \big(\sum_{0\leq |\alpha| \leq m} \|D^{\alpha}u\|_{L^p(\Omega)}^p\big)^{\frac{1}{p}},\\
 \|u\|_{W^{l,\infty}(\Omega)} = max_{0\leq |\alpha| \leq m} \|D^{\alpha}u\|_{\infty}.
\end{align}
\end{subequations}
More details 
%for the above definitions 
about Sobolev's function spaces
can be found in  \cite{LMMT:Adams_Sobolevbook}.
We often write $a\sim b$, meaning that $C_m a \leq b \leq C_M a$, with $C_m $ and $ C_M$ are positive
constants independent of the discretization parameters. 

%%%%%%%%%%%%%%%%%%%%%%%%%
\subsection{The model problem}
%Let $\Omega$ be a bounded  domain in $\mathbb{R}^d,{\ }d=2,3$ whose boundary
%$\Gamma_D=\partial \Omega$ contains geometric singular parts. 
Let us assume that the boundary of $\Gamma_D=\partial \Omega$ of $\Omega$
contains geometric singular parts. 
In particular, for $d=2$, we consider domains  which have corner
boundary points with internal angles greater than $\pi$.  
For $d=3$, we consider that case where                      
the domain $\Omega$ can be  
%described as  
%represented 
described
in the form
$\Omega = \Omega_2 \times Z$,
where $\Omega_2 \subset \mathbb{R}^2$ and $Z=[0,z_M]$ is an interval.
The cross section of $\Omega$ has only one corner with an interior angle $\omega \in (\pi,2\pi)$.
This means that the $\partial \Omega$ has only one singular edge 
which is $\Gamma_s:=\{(0,0,z), 0\leq z \leq z_M\}$.
The remaining parts  of $\Gamma_D$ are considered as smooth,
see Fig. \ref{fg1_3dEdge}(a) and Fig. \ref{fg1_3dEdge}(b)    
for an illustration of the domains.
\par
For simplicity, we restrict our study to the following model problem
\begin{equation} \label{0}
  -\mathrm{div}(\alpha \nabla u) = f \quad \text{in} \quad \Omega, \quad u  = u_D  
    \quad \text{on} \quad \partial \Omega,
\end{equation}
%
%In \eqref{0}, 
where the coefficient  $ \alpha (x) \in  L^{\infty}(\Omega)$ 
is a piecewise constant function, bounded from above
and below by  positive constants, 
 $f\in L^2(\Omega)$ and $ u_D\in H^{\frac{1}{2}}(\partial \Omega)$ are given data.  
% \par
 The variational formulation of \eqref{0} reads as follows: 
find $u\in W^{1,2}(\Omega)$ 
% $u|_{\partial \Omega}=u_D$
 such that $u=u_D$ on $\Gamma_D=\partial \Omega$ and 
\begin{subequations}\label{0.0}
 \begin{align}\label{0.0a}
  a(u,v)=l(v), \quad \forall v\in W^{1,2}_0(\Omega),\qquad\\
  \intertext{where}
  \label{0.0b}
  a(u,v)=\int_{\Omega}\alpha \nabla u\cdot \nabla v\,dx
  \quad \mbox{and} \quad l(v)=\int_{\Omega}fv\,dx.
 \end{align}
\end{subequations}
It is clear that, under the assumptions made above, there exists
a unique solution of the variational problem \eqref{0.0}
due to Lax-Milgram's lemma.

%UL: We should not introduce only one subsubsection !!!
%\subsubsection{Analytical properties of $u$ around the singular boundary points}

\par 
We follow the theoretical analysis of the regularity 
of solution presented in \cite{LMMT:Grisvard:1992a}.  
%We follow the theoretical analysis in \cite{LMMT:Grisvard:1992a}. 
We consider the two-dimensional case. Suppose that the  $\Gamma_D$ has only one
singular corner, say $P_s$,   with  internal angle $\omega \in (\pi,2\pi)$, 
and that the boundary parts from the one and the other side of $P_s$ are straight lines,
see Fig.~\ref{fg1_3dEdge}(a).
We consider the local cylindrical coordinates $(r,\theta)$ with origin $P_s$,
and define the cone (a circular sector with angular point $P_s$).
\begin{align}\label{0.1}
   \hspace*{-5mm}{
   \cal{C}=\{(x,y)\in \Omega: x=r\cos(\theta),y=r\sin(\theta), 0<r<R,0<\theta<\omega\}.
   }
\end{align}
We construct a highly smooth cut-off function $\xi$ in $\cal{C}$, 
such that $\xi\in C^{\infty}$, 
and it is supported inside the cone $\cal{C}$. 
It has been shown in \cite{LMMT:Grisvard:1992a}, that the solution $u$ of the problem
(\ref{0.0})  
can be  written as a sum of a regular function $u_r\in W^{l\geq 2,2}(\Omega)$ 
and a singular function $u_s$,
\begin{align}\label{0.4}
 u=u_r+u_{s},
\end{align}
with
 \begin{align}\label{0.3}
   u_s=\xi(r)\gamma r^{\lambda}\sin(\lambda \theta),
  \end{align}
where $\gamma$ is the \textit{stress intensity factor} 
(for the two-dimensional problems   is a real number depending only on $f$)
and  $\lambda=\frac{\pi}{\omega}\in (0,1)$ is
an exponent which determines the strength of the singularity. 
Since $\lambda <1$, by an easy computation, we can show that
the singular function $u_s$ does not belong to $W^{2,2}(\Omega)$ 
but to $W^{l=2,p}(\Omega)$ with
%$p=\frac{2}{2-\lambda}$.  
$p={2}/{(2-\lambda)}$. 
Consequently, the
regularity properties of $u$ in $\cal{C}$ are mainly determined by the regularity properties of $u_s$,
and we can 
%consider  
assume
that 
$u\in W^{1,2}(\Omega)\cap W^{l,p}(\cal{T}_H(\Omega))$, (see below details for the $\cal{T}_H(\Omega)$).  
\begin{remark}
For the expression (\ref{0.3}), we admit that the computational domain has only 
one non-convex corner and 
only Dirichlet boundary conditions are prescribed on $\partial \Omega$. 
Similar expression can be derived if there are   more  non-convex corners 
and if there are other type of boundary conditions,
see  details in \cite{LMMT:Grisvard:1992a}.
\end{remark}

%%%%%%%%%%%%%%%%%%%%%%%%%%%%%%%%%%%%%%%%%%%%%%%%%%%%%%%%%%%%%%%%%%%%%%%%%%%%%%%%%
\subsection{The dG IgA discrete scheme}
\subsubsection{Isogeometric Analysis Spaces}
We assume a non-overlapping subdivision 
$\mathcal{T}_{H}(\Omega) :=\{\Omega_i\}_{i=1}^N$ of the computational domain $\Omega$ such that
$\bar{\Omega}=\bigcup_{i=1}^N\bar{\Omega}_{i}$ with $\Omega_i\cap \Omega_j=\emptyset$ for $i\neq j$.
The subdivision  $\mathcal{T}_{H}(\Omega)$ is considered to be compatible 
with the discontinuities of the coefficient $\alpha$, 
%this means that 
i.e.,
the jumps can only appear  on the interfaces 
$F_{ij}=\partial \Omega_i \cap \partial\Omega_j$ between the subdomains.
%of the subdomains, say  $F_{ij}=\partial \Omega_i \bigcap \partial\Omega_j$. 
For the sake of brevity in our notations,  the set of common interior faces 
%of the boundaries of the  subdomains 
are denoted by $\mathcal{F}_{I}$. 
The collection of the  faces  that belong to  
$\partial \Omega$ are denoted  by $\mathcal{F}_B$, 
i.e., $F\in \mathcal{F}_B$,
if there is a $\Omega_i\in \mathcal{T}_{H}(\Omega)$ such that $F=\partial \Omega_i \cap \partial \Omega$.
We denote the set of all subdomain faces by $\mathcal{F} = \mathcal{F}_{I} \cup \mathcal{F}_B.$ 

\par
In the multi-patch (multi-subdomain) IgA  context, 
each subdomain is represented by a B-spline (or NURBS) mapping. To accomplish this, 
we associate  each $\Omega_i$ with a vector of knots 
$\mathbf{\Xi}^d_i=(\Xi_i^1,...,\Xi_i^{\iota},...,\Xi_i^d)$,
%such that for 
%$1\leq \iota \leq d$, is $\Xi_i^\iota=\{\mathbf{\xi}^\iota_1,\mathbf{\xi}_2^\iota,...,\xi_{n}^\iota\}$,
with
$\Xi_i^\iota=\{\mathbf{\xi}^\iota_1,\mathbf{\xi}_2^\iota,...,\xi_{n}^\iota\}$, 
$\iota = 1,\ldots,d$, 
which are set on the parametric domain $\widehat{\Omega}=(0,1)^d$. 
The interior knots of $\mathbf{\Xi}^d_i$ are considered without  repetitions and form 
a mesh  $T^{(i)}_{h_i,\widehat{\Omega}}=\{\hat{E}_m\}_{m=1}^{M_i}$ in $\widehat{\Omega}$,
where $\hat{E}_m$ are the micro elements. 
Given a micro element $\hat{E}_m\in T^{(i)}_{h_i,\widehat{\Omega}} $, we denote by 
$h_{\hat{E}_m}=diameter(\hat{E}_m)$, and the 
local grid size $h_i$ is defined to be the maximum diameter
of all $\hat{E}_m\in T^{(i)}_{h_i,\widehat{\Omega}}$, that is $h_i= \max\{h_{\hat{E}_m}\}$.
We refer the reader to \cite{LMMT:Hughes_IGABook} for more information 
about the meaning of the knot vectors in CAD and IgA.
%
%The local grid size $h_i$ is defined to be the maximum diameter
%of all micro element diameters $h_{\hat{E}_m}$, that is 
%$h_i= \max\{h_{\hat{E}_m}\},{\ } \hat{E}_m \in  T^{(i)}_{h_i,\widehat{\Omega}}$, see Fig. \ref{fg1_Domains}.
% We denote by $h=\max\{h_i\},i=1,...,N$ the global mesh size.
\begin{assume}\label{assume1}
	The  meshes $T^{(i)}_{h_i,\widehat{\Omega}}$ defined by the knots $\mathbf{\Xi}^d_i$ 
	are quasi-uniform, i.e.,
	there exist a constant $\sigma \geq 1$ such that 
	$\sigma^{-1} \leq \frac{h_{\hat{E}_m}}{h_{\hat{E}_{m+1}}} \leq \sigma$.\\
%	\textcolor{red}{This assumption maybe conflicts with the graded meshes that we present%
%	in Section Numerical examples. For this I wrote (initial) but still ?????? \\
%        No, even the graded meshes which belong in the zones satisfy similar
%condition as in Asumption 1, see (3.1b)}
\end{assume}
On each $T^{(i)}_{h_i,\widehat{\Omega}}$,  we derive the finite dimensional 
space $\hat{\mathbb{B}}^{(i)}_{h_i}$ spanned by  
 B-spline (or NURBS) basis functions of degree $k$, see more details in 
 \cite{LMMT:Hughes_IGABook,LMMT:BazilevsBeiraoCottrellHughesSangalli:2006a,LMMT:Schumaker_Bspline_book},
\begin{subequations}\label{0.00b}
\begin{align}\label{0.00b1}
 \hat{\mathbb{B}}^{(i)}_{h_i}=
span\{\hat{B}_\mathbf{j}^{(i)}(\hat{x})\}_{\mathbf{j}=0}^{dim(\hat{\mathbb{B}}_{h_{i}}^{(i)})}. 
\intertext{Every $\hat B_\mathbf{j}^{(i)}(\hat{x})$ function  in (\ref{0.00b1}) 
      is derived by means of tensor
           products of  one-dimensional B-spline basis functions, i.e.}
 \label{0.00b2}
\hat B_{\mathbf{j}}^{(i)}(\hat{x}) = \hat B_{\iota=1,j_1}^{(i)}(\hat{x}_1)\cdot\cdot\cdot \hat B_{\iota=d,j_d}^{(i)}(\hat{x}_d).
\end{align}
\end{subequations}
 In the following, we suppose that  the one-dimensional B-splines in (\ref{0.00b2}) have the same degree $k$. 
 Finally, having the B-spline spaces, we can represent each  subdomain $\Omega_i$
%, i=1,...,N$ 
by the parametric mapping
\begin{subequations}\label{0.0c}
\begin{align}
\label{0.0c1}
 \mathbf{\Phi}_i: \widehat{\Omega} \rightarrow \Omega_i, &\quad
 \mathbf{\Phi}_i(\hat{x}) = \sum_{\mathbf{j}}C^{(i)}_\mathbf{j} \hat{B}_\mathbf{j}^{(i)}(\hat{x}):=x\in \Omega_i,\\
 \label{0.0c2}
 \text{with}&\quad \hat{x} = \mathbf{\Psi}_i(x):=\mathbf{\Phi}^{-1}_i(x),
 \end{align}
\end{subequations}
   where $C_\mathbf{j}^{(i)}$ are the B-spline control points, $i=1,...,N$,
cf. \cite{LMMT:Hughes_IGABook}.
 \par  
 We construct a mesh $T^{(i)}_{h_i,\Omega_i} =\{E_{m}\}_{m=1}^{M_i}$
 for every $\Omega_i$, whose vertices are the images of the vertices
 of the corresponding parametric mesh $T^{(i)}_{h_i,\widehat{\Omega}}$
 through $\mathbf{\Phi}_i$.  Notice that, the above 
 subdomain mesh construction  can result in non-matching
 meshes along the patch interfaces.
\par
 Further, by taking advantage of  the properties of $\mathbf{\Phi}_i$,
 we  define 
%in $\Omega$ 
the global finite dimensional B-spline (dG) space
 \begin{subequations}\label{0.0d}
 \begin{align}\label{0.0d1}
\mathbb{B}_h(\mathcal{T}_{H}):=\mathbb{{B}}^{(i)}_{h_i}(\Omega_i)\times ...\times \mathbb{{B}}^{(N)}_{h_N}(\Omega_N), 
\intertext{where every  $\mathbb{B}^{(i)}_{h_{i}}(\Omega_i)$ is defined 
on $T^{(i)}_{h_i,\Omega_i}$ as follows:}
\label{0.0d2}
 \mathbb{B}^{(i)}_{h_{i}}(\Omega_i):=\{B_{\mathbf{j}}^{(i)}|_{\Omega_i}: B_\mathbf{j}^{(i)}({x})=
  \hat{B}_\mathbf{j}^{(i)}\circ \mathbf{\Psi}_i({x}),{\ }\text{for}{\ }
  \hat{B}_\mathbf{j}^{(i)}\in \hat{\mathbb{B}}^{(i)}_{h_i} \}. 
\end{align}
\end{subequations}
Later, the solution $u$ of the problem (\ref{0.0}) will be approximated by the discrete (dG) solution
$u_h \in \mathbb{B}_h(\mathcal{T}_{H})$.
%%%%%%%%%%%%%%%%%%%%%%%%%%%%%%%%%%%%%%%%%%%%%%%%%%%%%%%%%%%%%%%%%%%%%%%%%%%%
\begin{figure}
 \begin{subfigmatrix}{1}% number of columns
\includegraphics[width=0.45\textwidth]{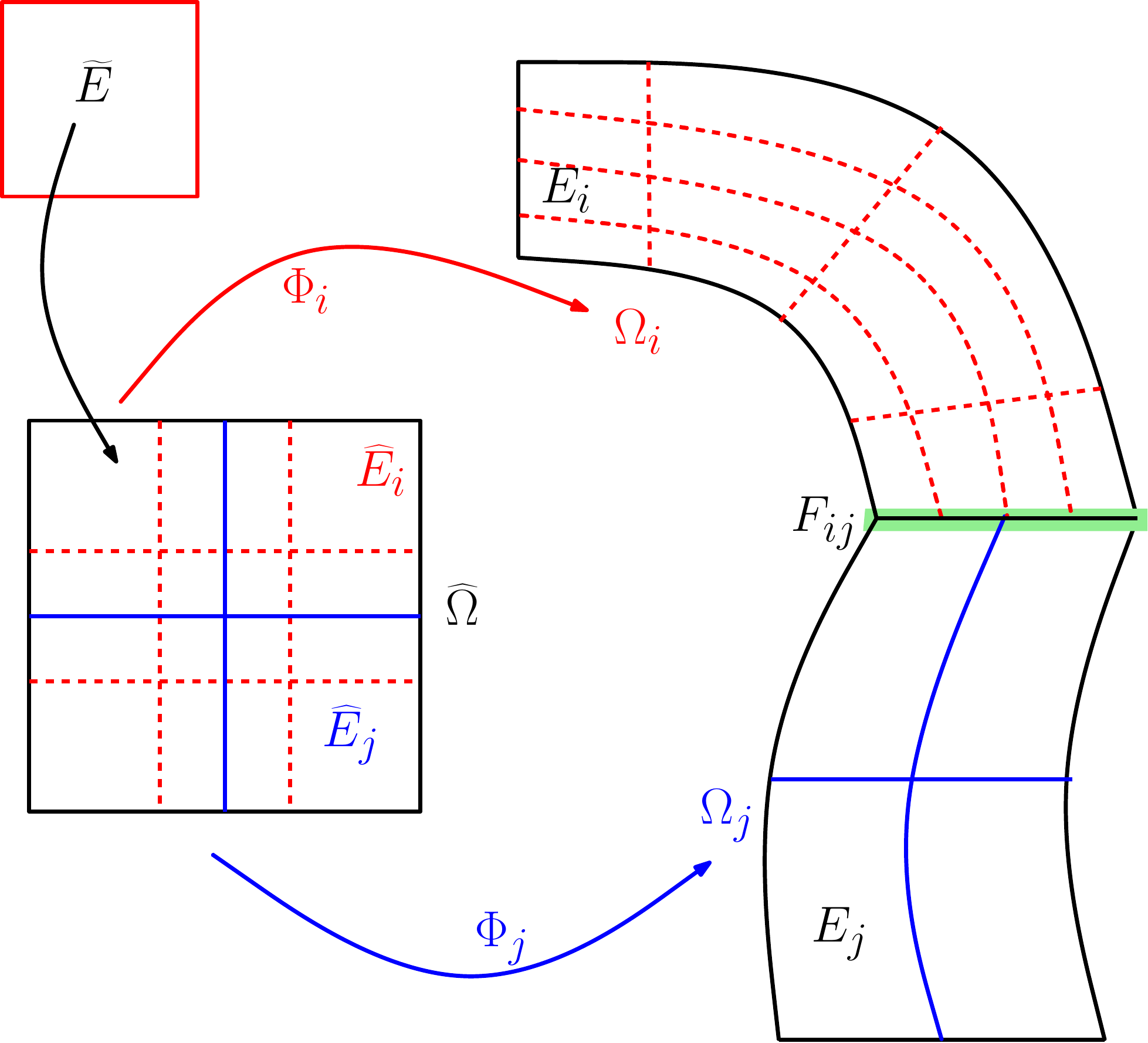}
 \end{subfigmatrix}
 \caption{The parametric domain  and two adjacent subdomains  with different underlying
          meshes red and blue.}
 \label{fg1_Domains}
\end{figure}
%%%%%%%%%%%%%%%%%%%%%%%%%%%%%%%%%%%%%%%%%%%%%%%%%%%%%%%%%%%%%%%%%%%%%%%%%%%%%%
 \subsubsection{Discrete Problem}
 The problem (\ref{0.0}) is independently discretized in every $\Omega_i$ using the  spaces (\ref{0.0d2})
 without imposing   continuity requirements for the B-spline basis functions 
 on the interfaces $F_{ij}=\partial \Omega_i \cap \partial \Omega_j$ 
 and also  non-matching grids may exist. 
 Using the notation $\phi_h^{(i)}:=\phi_h|_{\Omega_i}$,
we define the average and the jump of $\phi_h \in \mathbb{B}_h(\mathcal{T}_{H})$ on $F_{ij}\in\mathcal{F}_I$  by
\begin{subequations}\label{5a}
\begin{align}
   \{\phi_h\}:=\frac{1}{2}(\phi_h^{(i)}+\phi_h^{(j)}), & {\ }\text{and}\qquad\llbracket \phi_h \rrbracket :=\phi_h^{(i)} - \phi_h^{(j)},\\
   \intertext{ and, for $F_i\in \mathcal{F}_B$,}
   \label{3.16c}
  \{\phi_h\}:=\phi_h^{(i)}, &{\ }\text{and}\qquad\llbracket \phi_h\rrbracket :=
  \phi_h^{(i)}.
\end{align}
\end{subequations}
 
The discrete problem is specified by the symmetric dG IgA method, 
see \cite{LMMT:LangerToulopoulos:2014a}, and 
%has 
reads
as follows: find 
%NONSENSE
%$u_h(x)\in \mathbb{B}_h(\mathcal{T}_{H})$ 
$u_h\in \mathbb{B}_h(\mathcal{T}_{H})$
such that
\begin{subequations}\label{3}
\begin{flalign}\label{3a}
 a_h(u_h,\phi_h)=&l(\phi_h)+p_D(u_D,\phi_h),{\ } \forall \phi_h \in \mathbb{B}_h(\mathcal{T}_{H}),
 \intertext{where the dG bilinear form is given by}
 \label{3b}
 a_h(u_h,\phi_h) = & \sum_{i=1}^N \Big( a_i(u_h,\phi_h)-\sum_{F_{ij}\subset \partial\Omega_i}\big(\frac{1}{2}s_i(u_h,\phi_h)+p_i(u_h,\phi_h)\big)\Big)
% \intertext{ where the linear forms in (\ref{3b}) are as follows (cf. \cite{LMMT:Dryja:2003a}):}
\intertext{ with the bilinear forms (cf. also \cite{LMMT:Dryja:2003a}):}
 \label{3c}
 a_i(u_h,\phi_h) =& \int_{\Omega_i}\alpha\nabla u_h\nabla\phi_h\,dx, \\
 \label{3d}
 s_i( u_h,\phi_h)=& \int_{F_{ij}} \{\alpha\nabla u_h\}\cdot \mathbf{n}_{F_{ij}} \llbracket \phi_h \rrbracket +
                                                 \{\alpha\nabla \phi_h\}\cdot \mathbf{n}_{F_{ij}} \llbracket u_h \rrbracket\,ds, \\
 \label{3e}
 p_i(u_h,\phi_h) =&
                   \begin{cases}
 	                \int_{F_{ij}} \Big(\frac{\mu \alpha^{(j)}}{h_j}+\frac{\mu \alpha^{(i)}}{h_i}\Big) \llbracket  u_h \rrbracket \llbracket \phi_h \rrbracket\,ds,&\text{if}{\ } F_{ij}\in \mathcal{F}_I, \\
 	                \int_{F_{i}} \frac{\mu \alpha^{(i)}}{h_i} \llbracket  u_h \rrbracket \llbracket \phi_h \rrbracket\,ds,&\text{if}{\ } F_{ij}\in \mathcal{F}_B, \\
                   \end{cases}\\
 \label{3f}
 p_D(u_D,\phi_h)=&  \int_{F_{i}} \frac{\mu \alpha^{(i)}}{h_i}  u_D  
\phi_h\,ds, {\ }\quad F_{i}\in \mathcal{F}_B.
\end{flalign}
\end{subequations}
Here the unit normal vector  $\mathbf{n}_{F_{ij}}$ is oriented from $\Omega_i$ 
towards the interior of $\Omega_j$.
The penalty parameter $\mu>0$ must be chosen large enough 
in order to ensure the stability of the dG IgA method 
\cite{LMMT:LangerToulopoulos:2014a}.  
%%%%%%%%%%%%%%%%%%%%%%%%%%%%%%%%%%%%%%%%%%%%%%%%%%%%%%%%%%%%%%%%%%%%%%%%%%%%
% 
%  
%  
%  
%  
%  
%  
% 
%%%%%%%%%%%%%%%%%%%%%%%%%%%%%%
  \begin{figure}
    \begin{subfigmatrix}{3}% number of columns
     \subfigure[]{\includegraphics[scale=0.24]{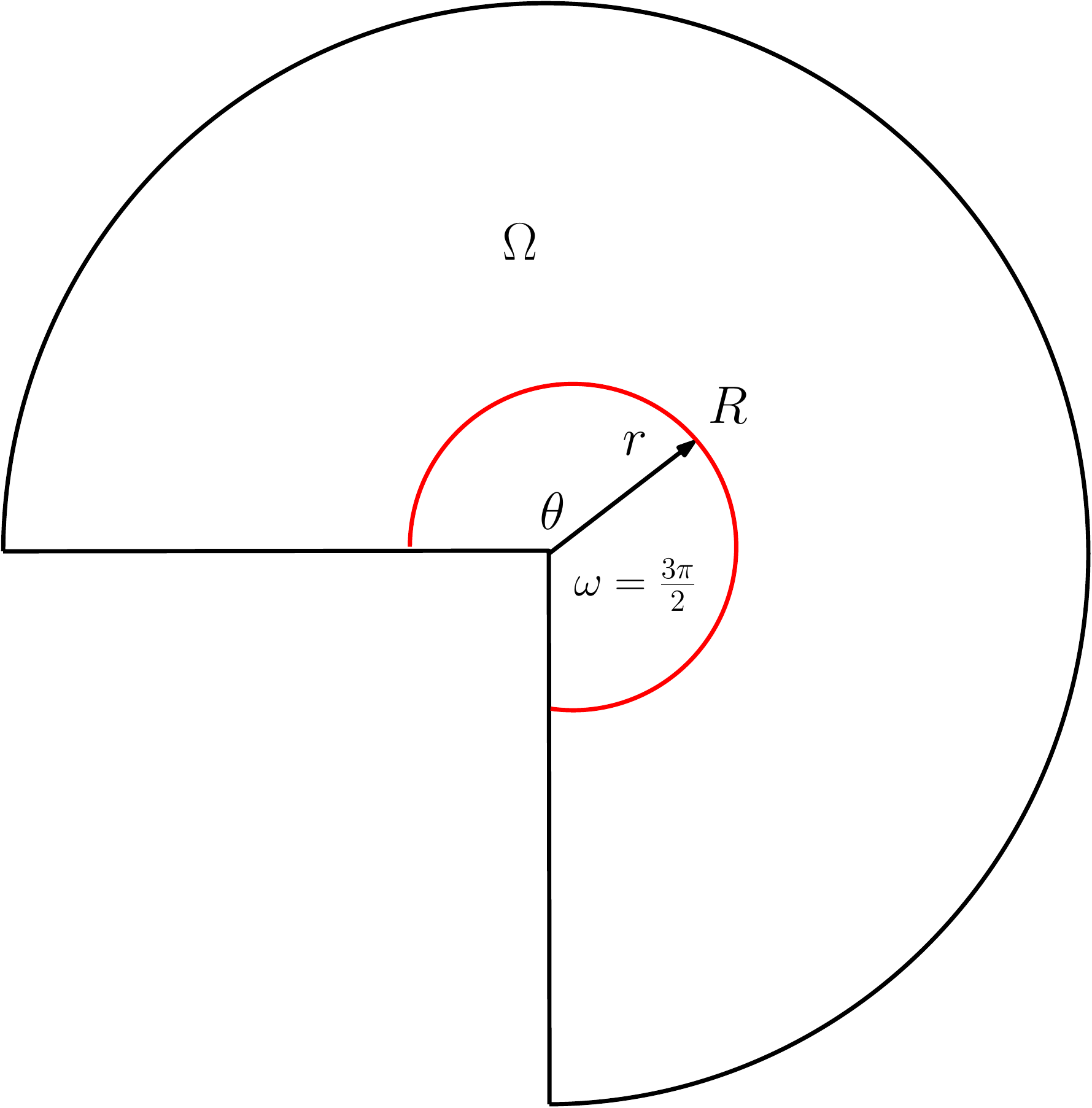}}
     \subfigure[]{\includegraphics[scale=0.45]{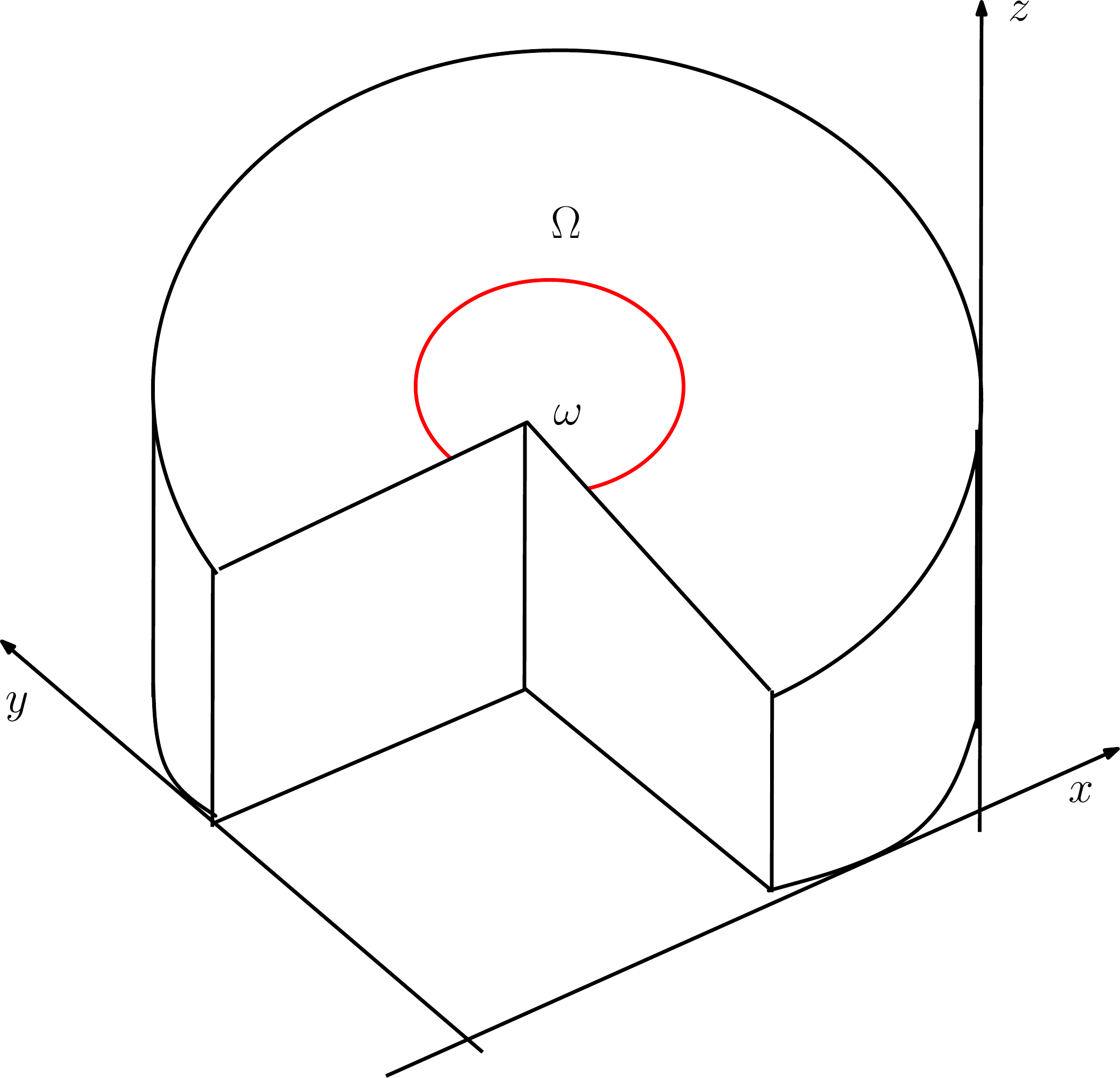}}
     \subfigure[]{\includegraphics[scale=0.24]{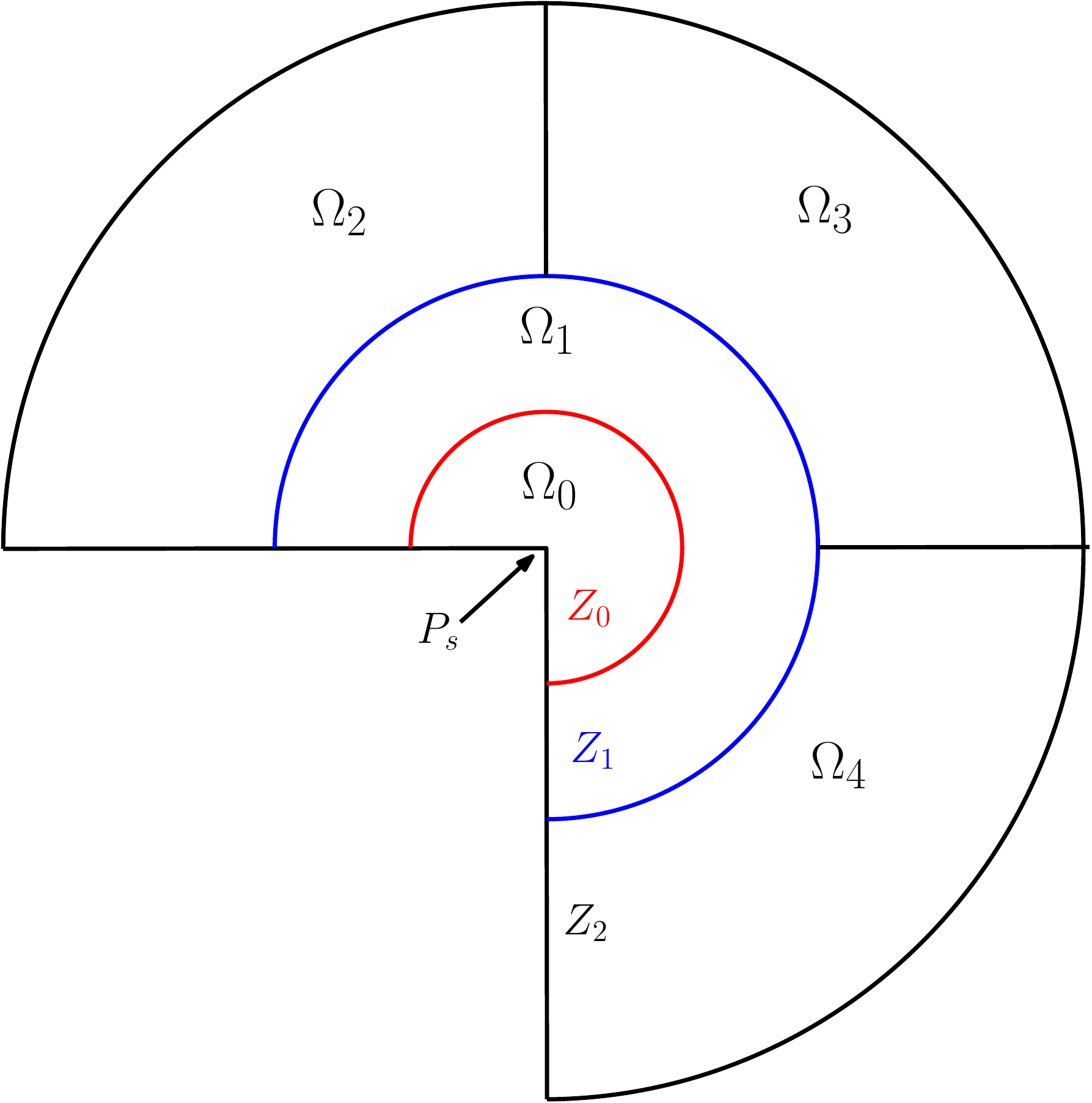}}
    \end{subfigmatrix}
 \caption{The domains, (a) two-dimensional with corner singularity, (b)
          three-dimensional with re-entrant edge,
          (c) subdivision of $\Omega$ into zones and subdomains.}
 \label{fg1_3dEdge}
\end{figure}
%%%%%%%%%%%%%%%%%%%%%%%%%%%%%%%%%%%%%%%%%%%%%%%%%%%%%%%%%%%%%%%%%%%%%%%%5

%%%%%%%%%%%%%%%%%%%%%%%%%%%%%%%%%%%%%%%%%%%%%%%%%%%%%%%%%%%%%%%%
\section{IgA on graded meshes}

In many realistic applications,
we very often have to solve problems similar to (\ref{0.0}) in domains with non-smooth boundary parts, 
that possess geometric singularities, for instance, non-convex corners, see Fig. \ref{fg1_3dEdge}.
It is well-known that the numerical methods loose accuracy when they are applied to this type
of problems. 
This occurs as a result of the reduced regularity of  the solutions in the vicinity  of the
non-smooth parts \cite{LMMT:Grisvard:1985a}. 
  When finite element methods are used,    graded  meshes have been  utilized
   around the singular boundary parts  in order to obtain optimal convergence rates, 
    see e. g.
  \cite{LMMT:ApelSandingWhiteman:1996,LMMT:ApelHeinrich:1994a}, 
  see also \cite{LMMT:FeistauerSaendig:2012a} for dG methods. 
  The basic idea of this grading mesh technique is to use the 
  a priori knowledge of the singular behavior of the solution around the singular boundary points, 
%  (the forms (\ref{0.3}) and (\ref{0.4})), 
cf. (\ref{0.3}) and (\ref{0.4})),
and consequently adjust accordingly the size of the elements. 
  
\par
The purpose of this paper is to extend  the grading mesh techniques from the finite element method
to dG IgA framework for solving  boundary value problems like  (\ref{0.0})  
in the presence of singular points. 
%with the existence of singular points.  
We develop a mesh grading algorithm  around the singular boundary parts 
inspired by the grading mesh methodology using layers, therefore, 
extending the approach used in finite element methods, 
cf. \cite{LMMT:ApelSandingWhiteman:1996,LMMT:ApelMilde1996}, to isogeometric analysis.
Next, we construct the graded mesh and show that the proposed dG IgA method exhibits 
optimal  convergence rates as for the problems with high regularity  solutions.
We present our mesh grading technique and the corresponding analysis for two-dimensional problems.
In Section~4, we also apply our methodology to some three-dimensional examples  
and discuss the numerical results.
%We focus and present our methodology for two dimensional problems. In the Section of the 
%numerical examples, we apply our methodology on three dimensional problems. 
%%%%%%%%%%%%%%%%%%%%%%%%%%%%%%%%%%%%%%%%%%%%%%%%%%%%%%%%%%%%%%%%%%%%%
\subsection{A priori mesh grading} \label{apriorimesh}
The grading of the  meshes around the singular points is guided by the  exponent $\lambda$, which
specifies the regularity of the function $u_s$, see (\ref{0.3}), 
and 
%also 
by  the location of the singular boundary point too.  
 Next, we discuss the construction of the mesh for the case of one singular geometric point on 
$\partial \Omega$.

\par
Let $P_s$ be the singular point and let 
%$U_s:=\{x\in \Omega: |P_s-x|\leq R$, with $ R= L_{U} h,$ and $L_{U}\geq 2\}$, 
$U_s:=\{x\in \Omega: |P_s-x| \leq R= L_{U} h, \;\mbox{with}\;   L_{U}\geq 2\}$ 
 be  an area around $P_s$ in $\Omega$, which
 is further  subdivided  into    
%$Z_{\zeta}, {\ }\zeta=0,..,\zeta_M \geq 1$ ring type zones,  
$\zeta_M$ ring-type zones $Z_{\zeta}$, $\zeta=0,..,\zeta_M$,
%with distance from $P_s$ such that $D_{(Z_{\zeta},P_s)}:=C(n_\zeta h)^{\frac{1}{\mu}}$,  
such that the distance from $P_s$ is $D_{(Z_{\zeta},P_s)}:=C(n_\zeta h)^{\frac{1}{\mu}}$,  
where $C=R^{1-\frac{1}{\mu}}$ and
% for the  parameter $n_{\zeta}$ holds 
$0\leq  n_\zeta < L_{U} $.  By $\mu \in (0,1]$, 
 we denote the grading control parameter.
 The  radius of every zone is defined to be 
 $R_{Z_{\zeta}}:=D_{(Z_{\zeta+1},P_s)} - D_{(Z_{\zeta},P_s)}=C(n_{\zeta+1} h)^{\frac{1}{\mu}} -
 C(n_{\zeta} h)^{\frac{1}{\mu}}$, where we suppose that there is a $\nu>0$
 such that   $n_{\zeta+1}=n_{\zeta}+\nu$ with 
   $1 \leq \nu < L_{U}-1$. In particular,  we set $R_{Z_{M}}=R-D_{(Z_{M-1},P_s)}$.
 \par
 For convenience, we assume that the initial subdivision $\mathcal{T}_{H}(\Omega)$ fits to the 
   $Z_{\zeta}$ ring  zone partition in order to 
 fulfill the following conditions, for an illustration, see 
   Fig. \ref{fg1_3dEdge}(c) with $\zeta_M=3$:
\begin{itemize}
	\item The subdomains can be grouped into those which belong (entirely) into the area $U_s$ and those
	that belong (entirely) into $\Omega\setminus U_s$. This means that there is no $\Omega_i,{\ } i=1,...,N$
         such that $U_s\cap \Omega_i\neq \emptyset$ and $(\Omega\setminus U_s)\cap \Omega_i\neq \emptyset$.
        \item Every ring zone $Z_{\zeta}$  is partitioned into ``circular'' subdomains $\Omega_{i_{\zeta}}$, which
              have radius $R_{\Omega_{i_{\zeta}}}$ equal to the radius of the zone,
              that is $R_{\Omega_{i_{\zeta}}}= R_{Z_{\zeta}}$. For
              computational efficiency reasons, we prefer, if it is possible, 
              every zone to be only represented by one subdomain. 
	      This essentially depends  on the characteristics of the problem, 
               i.e., the shape of $\Omega$ and the coefficient $\alpha$.  
        \item The zone $Z_{0}$ is represented by one subdomain, say $\Omega_{i_0}$, and the mesh 
               $T^{({i_0})}_{h_{i_0}}(\Omega_{i_0})$ includes all the micro-elements $E$  such that $\partial E \cap P_s \neq \emptyset$. 
\end{itemize}
 We construct the  meshes $T^{(i_{\zeta})}_{h_{i_{\zeta}}}(\Omega_{i_{\zeta}})$   
 (we will explain later how we can choose the grid size)
in order to satisfy the following properties: 
for $\Omega_{i_{\zeta}}$  with distance $D_{(Z_{\zeta},P_s)}$ from $P_s$,
 the mesh size  $h_{i_{\zeta}}$  
is defined to be $h_{i_{\zeta}}=\cal{O}(h R_{\Omega_{i_{\zeta}}}^{1-\mu})$ and 
for  $T^{({i_0})}_{h_{i_0}}(\Omega_{i_0})$ the mesh size is  
of order $h_{i_{0}}=\cal{O}(h^{\frac{1}{\mu}}).$ 
Thus, we have the following relations:
\begin{subequations}\label{3.2}
\begin{align}\label{3.2a}
 C_m h^{\frac{1}{\mu}} &\leq h_{{i_{\zeta}}}   \leq C_M h^{\frac{1}{\mu}}, &\text{if}{\ }  \overline{\Omega}_{i_{\zeta}}\cap P_s\neq \emptyset, \\
\label{3.2b}
 C_m hR_{\Omega_{i_{\zeta}}}^{1-\mu} &        \leq h_{i_\zeta}   \leq C_M h D_{(Z_{\zeta},P_s)}^{1-\mu}, &\text{if}{\ }  \overline{\Omega}_{i_\zeta}\cap P_s= \emptyset. 
\end{align}
\end{subequations}
We need to specify the mesh size for 
every $T^{({i_\zeta})}_{h_{i_\zeta}}(\Omega_{i_\zeta})$ in order to satisfy   
inequalities (\ref{3.2}).
We set  the mesh size of  $T^{({i_\zeta})}_{h_{i_\zeta}}(\Omega_{i_\zeta})$ to be
of order 
%$h_{i_\zeta}=\cal{O}(\frac{R_{Z_{\zeta}}}{\nu^\frac{1}{\mu}})$, 
$h_{i_\zeta}=\cal{O}(R_{Z_{\zeta}}\nu^{-(1/\mu}))$, 
and, for a uniform subdomain mesh,
we can set 
%$h_{i_\zeta}=C\frac{((n_{\zeta}+\nu)h)^{\frac{1}{\mu}}-(n_{\zeta} h)^{\frac{1}{\mu}}}{int(\nu^\frac{1}{\mu})}$,
%
\begin{equation*}
h_{i_\zeta}=C\frac{(n_{\zeta}+\nu)h)^{\frac{1}{\mu}}-(n_{\zeta} h)^{\frac{1}{\mu}}}{int(\nu^\frac{1}{\mu})} 
\end{equation*}
where $C=R^{1-\frac{1}{\mu}}$ and 
%$int(\nu^\frac{1}{\mu})$ 
$int(\nu^{-\mu})$
denotes the nearest  integer to
%$\nu^\frac{1}{\mu}$.   
$\nu^{-\mu}$.
%UL: WHAT IS C ?
Notice that
the grading has ``a subdomain   character'' 
and is  mainly determined by the  parameter $\mu \in (0,1]$. 
For $\mu=1$, we get $h_{i_\zeta}=h$,
i.e., means we get quasi-uniform meshes.
%(this means we get quasi uniform meshes).
%Since $\frac{1}{\mu}\geq 1$, using 
Using inequality $\mu \le 1$ and 
inequality $(a+b)^\gamma \leq 2^{\gamma-1}(a^\gamma+b^\gamma)$,
which can easily be shown 
%using that 
since
the function $t^\gamma$ is convex 
in $(0,\infty)$,
we arrive at the estimates
\begin{align}\label{3.3}
h_{i_\zeta} & =C\frac{((n_{\zeta}+\nu)h)^{\frac{1}{\mu}}-(n_{\zeta} h)^{\frac{1}{\mu}}}{int(\nu^\frac{1}{\mu})} 
 \leq C\frac{C_\mu (n_{\zeta} h)^{\frac{1}{\mu}} +C_\mu (\nu h)^{\frac{1}{\mu}} -(n_{\zeta} h)^{\frac{1}{\mu}}}{int(\nu^\frac{1}{\mu})} \notag \\
& \leq  (C_{1,R,\mu,\nu}n_{\zeta})^{\frac{1}{\mu}} h h^{\frac{1}{\mu}-1} 
\leq  \frac{(C_{1,R,\mu,\nu}n_{\zeta})^{\frac{1}{\mu}}}{n_{\zeta}^{\frac{1-\mu}{\mu}}} h \Big(\big(n_{\zeta} h\big)^{\frac{1}{\mu}}\Big)^{1-\mu} \notag \\
& \leq (C_{2,\mu,\nu}n_{\zeta})^{\frac{1}{\mu}}h D^{1-\mu}_{(Z_{\zeta},P_s)},
\end{align}
which gives the right inequality in (\ref{3.2b}).
By the initial choice of $h_{i_\zeta}$, we have 
%that $h_{i_\zeta} =\frac{1}{int(\nu^\frac{1}{\mu})}R_{\Omega_{i_{\zeta}}}.$
$h_{i_\zeta} = R_{\Omega_{i_{\zeta}}} / int(\nu^{-\mu})$.
Since $1> 1-\mu \geq 0$,  we can easily show that 
\begin{align}\label{3.3b}
	\frac{1}{int(\nu^\frac{1}{\mu})}R_{\Omega_{i_{\zeta}}}^{1-1+\mu} \geq C_m h,
\end{align}
with $C_m=\frac{1}{2}( (n_{\zeta}+\nu)^{\frac{1}{\mu}}-n_{\zeta}^{\frac{1}{\mu}})$. 
%From the grid size choice and (\ref{3.3b}), 
From the  choice grid sizes made above and (\ref{3.3b}), 
we can derive the left inequality in (\ref{3.2b}). 
\begin{remark}
It is possible to apply  other techniques, see for example \cite{LMMT:ApelSandingWhiteman:1996,LMMT:ApelMilde1996},
of constructing graded meshes, where 
we could prove optimal rates for the dG IgA method. 
We prefer the way that is described above for its simplicity and because it suits
to the spirit  of the dG IgA methodology.
\end{remark}

\subsection{Quasi-interpolant, error  estimates}
Next,  we study the { error estimates} of the method (\ref{3}). 
%under the following regularity assumption.
%\begin{assume}\label{Assumption1}
%We assume for   the solution $u$ that 
%$u\in W^{l,2}_{\cal{S}}:=W^{1,2}(\Omega)\cap W^{l,2}(\cal{S}(\Omega)),{\ }l \geq 2$.
%\end{assume}
%\par
For the purposes of our analysis, we consider the enlarged  space 
\begin{align}\label{2.6a}
 W_h^{l,p}:=W^{1,2}(\Omega)\cap W^{l\geq 2,p}(\cal{T}_H(\Omega))+ \mathbb{B}_h(\cal{T}_H(\Omega)),
 \end{align}
%{\color{red}
 where $p\in (\max\{1,\frac{2d}{d+2(l-1)}\},2]$. 
 Let us mention that we allow different $l$ and $p$ in different
 subdomains $\Omega_i$. In particular, for subdomains ${\Omega}_i \cap U_s = \emptyset$, 
 we can set in  (\ref{2.6a}) $p=2$, for  subdomains ${\Omega}_i \cap U_s \neq \emptyset$, 
we set $1<p=\frac{2}{2-\lambda} < 2.$ 
%}
 The space $W_h^{1,2}$ 
is equipped with the  broken dG-norm 
\begin{equation}\label{20}
 \|u\|^2_{dG(\Omega)} = \sum_{i=1}^N\Big(\alpha^{(i)}\|\nabla u^{(i)}\|^2_{L^2(\Omega_i)} +
                p_i(u^{(i)},u^{(i)}) \Big), {\ }u\in  W_h^{1,2}.
\end{equation}
Let $f\in W^{1,2}(\Omega)\cap W^{l\geq 2,p}(\cal{T}_H(\Omega))$ with 
% $l\geq 2$ and 
$p\in (\max\{1,\frac{2d}{d+2(l-1)}\},2]$, then 
%on the B-spline space (\ref{0.0d}),
 we can construct a quasi-interpolant $\Pi_h f\in  \mathbb{B}_h(\mathcal{T}_{H})$ 
% 
%Next lines the dual basis and the B-spline interpolant very briefly are explained.
%We refer  to  \cite{LMMT:Schumaker_Bspline_book}, for more details. We use the tensor 
%product nature of every $T^{(i)}_{h_i,\Omega_i}$ and construct for every  dimension $\iota=1,..,d$,
% the set of the one dimensional dual basis 
%$\{\lambda_{\iota,j}, j=1,...,\dim(\mathbb{B}_{\iota}) \}$, see (\ref{0.00b2}) and (\ref{0.0d2}), then 
%using the tensor product nature of the spaces (\ref{0.0d2}), we can construct the dual basis  
%\begin{equation}\label{3.5_0}
% \{\mathbf{\Lambda}_{\mathbf{J}}^{(i)}=\lambda_{\iota=1,j_1}...\lambda_{\iota=d,j_d}\},
% {\ }1\leq j_1,...,j_d \leq  \dim(\mathbb{B}_{\iota}),
% \end{equation}
% for every subdomain B-spline space. Collecting all $\mathbf{\Lambda}_{\mathbf{J}}^{(i)}$,
% we can have the dual basis $\mathbf{\Lambda}_{\mathbf{J}}$ 
%for the  $\mathbb{B}_h(\mathcal{T}_{H})$-spline space in (\ref{0.0d}).  
%Using (\ref{3.5_0}) and the space (\ref{0.0d}), we can construct for every $f\in W^{l,p}(\mathcal{T}_{H}(\Omega))$
%a quasi-interpolant $\Pi_h f\in  \mathbb{B}_h(\mathcal{T}_{H})$ 
%\begin{equation}\label{3.5_1}
%\Pi_h f(x) = \sum_{\mathbf{J}}(\mathbf{\Lambda}_{\mathbf{J}} f ) B_{\mathbf{J}}(x), {\ } B_{\mathbf{J}}
%\in \mathbb{B}_h(\mathcal{T}_{H}),
%\end{equation}
%\mybox{shall I introduce  the index i in previous notations???????} 
such that
$\Pi_h f=f$ for all $f\in \mathbb{B}_h(\mathcal{T}_{H})$.
We refer  to  \cite{LMMT:Schumaker_Bspline_book}, 
see also  \cite{LMMT:BazilevsBeiraoCottrellHughesSangalli:2006a},
for more details about the construction of $\Pi_h f$.
We have the following approximation estimate.
%on how $\Pi_h$ approximates
%$u\in W^{1,2}(\Omega)\cap W^{l,p}(\cal{T}_{H}(\Omega))$.
%%%
\begin{lemma}\label{lemma5.3}
Let $u\in W^{1,2}(\Omega)\cap W^{l,p}(\cal{T}_H(\Omega))$ with 
% $l\geq 2$ and 
$p\in (\max\{1,\frac{2d}{d+2(l-1)}\},2]$
and $l\geq 2$,
 and let 
$E=\mathbf{\Phi}_i(\hat{E}), \hat{E}\in T^{(i)}_{h_i,\hat{\Omega}}$. 
Then, for  $0 \leq m \leq l \leq k+1$, 
  there exist an quasi-interpolant
  $\Pi_h u \in  \mathbb{B}_{h}(\cal{T}_H)$ and  
 constants 
 $C_i:=C_i\big(\max_{l_0 \leq l}(\|D^{l_0}\mathbf{\Phi}_i\|_{L^{\infty}(\Omega_i)})\big)$
 such that
 \begin{align}\label{3.5}
 \sum_{E\in T^{(i)}_{h_i,\Omega_i}} |u-\Pi_h u|^p_{W^{m,p}(E)} \leq C_i  \|u\|_{W^{l,p}(\Omega_i)} h_i^{p(l-m)}. 
\end{align}
 Furthermore, we have the following estimates in  the $\|.\|_{dG(\Omega)}$ norm
 \begin{subequations}\label{3.6}
 \begin{align}\label{3.6a}
  \|u-\Pi_h u\|_{dG(\Omega)} \leq &  \sum_{i=1}^N C_i\Big( h_i^{\delta (l,p,d)}\|u\|_{W^{l,p}(\Omega_i)}\Big)  +\\
\nonumber
 &\qquad \sum_{i=1}^N \sum_{F_{ij}\subset \partial \Omega_i}C_i\alpha^{(j)}\frac{h_i}{h_j}\Big( h_i^{\delta (l,p,d)}\|u\|_{W^{l,p}(\Omega_i)}\Big),\\
 \label{3.6b}
  \|u_h-\Pi_h u\|_{dG(\Omega)} \leq &  \|u-\Pi_h u\|_{dG(\Omega)} +\sum_{i=1}^N C_i h_i^{\delta (l,p,d)}\|u\|_{W^{l,p}(\Omega_i)},
 \end{align}
 \end{subequations}
 where $\delta (l,p,d)= l+(\frac{d}{2}-\frac{d}{p}-1)$. 
 \end{lemma}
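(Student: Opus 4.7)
The plan is to prove (3.5) first by the standard pullback argument, then assemble the dG-norm bounds (3.6) by splitting into bulk and face contributions and exploiting the scaling between $L^p$ and $L^2$ on elements of size $h_i$.

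For (3.5), I would start on the parametric side. Standard B-spline quasi-interpolation estimates on the reference mesh $T^{(i)}_{h_i,\widehat\Omega}$ (from Schumaker, and in the IgA context from Bazilevs--Beir\~ao da Veiga--Cottrell--Hughes--Sangalli) give, for $\hat u\in W^{l,p}(\widehat{\Omega})$ and $0\le m\le l\le k+1$,
\begin{equation*}
|\hat u-\hat\Pi_{h_i}\hat u|_{W^{m,p}(\hat E)}\le C\,h_{\hat E}^{l-m}\,\|\hat u\|_{W^{l,p}(\widetilde E)},
\end{equation*}
where $\widetilde E$ is the support-extension patch of $\hat E$. Pulling this back to $\Omega_i$ through $\mathbf{\Phi}_i$ using the chain rule, I pick up constants of the form $\max_{l_0\le l}\|D^{l_0}\mathbf{\Phi}_i\|_{L^\infty}$ and the analogous bound for $\mathbf{\Psi}_i$; summing over $E\in T^{(i)}_{h_i,\Omega_i}$ and using the uniform finite overlap of the support extensions and Assumption~\ref{assume1} yields (3.5). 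This step is routine; the only care needed is keeping track of $p\ne 2$ in the change-of-variables determinant.

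For the bulk part of (3.6a), i.e.\ $\sum_i \alpha^{(i)}\|\nabla(u-\Pi_h u)\|^2_{L^2(\Omega_i)}$, the key move is to convert the $L^p$-valued bound (3.5) (with $m=1$) into an $L^2$-valued one. Elementwise H\"older gives $|w|_{H^1(E)}\le |E|^{1/2-1/p}|w|_{W^{1,p}(E)}\le C\,h_i^{d/2-d/p}|w|_{W^{1,p}(E)}$, and for $p\le 2$ the embedding $\ell^p\hookrightarrow\ell^2$ allows me to sum:
\begin{equation*}
|w|^2_{H^1(\Omega_i)}\le C\,h_i^{d-2d/p}\Big(\sum_{E}|w|^p_{W^{1,p}(E)}\Big)^{2/p}\le C\,h_i^{2(l-1)+d-2d/p}\|u\|^2_{W^{l,p}(\Omega_i)},
\end{equation*}
which is exactly $h_i^{2\delta(l,p,d)}\|u\|^2_{W^{l,p}(\Omega_i)}$. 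The condition $p>\max\{1,2d/(d+2(l-1))\}$ is precisely what ensures $\delta(l,p,d)>0$. For the penalty contribution $p_i(u-\Pi_h u,u-\Pi_h u)$, I combine the scaled trace inequality on elements touching $F_{ij}$,
\begin{equation*}
h_i^{-1}\|w\|^2_{L^2(F_{ij}\cap\partial E)}\le C\bigl(h_i^{-2}\|w\|^2_{L^2(E)}+\|\nabla w\|^2_{L^2(E)}\bigr),
\end{equation*}
with (3.5) for $m=0,1$; the $L^2$-conversion done above applies again, and the $m=0$ bound produces one extra power of $h_i$ which is exactly absorbed by the $h_i^{-2}$ from the trace/penalty scaling. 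The cross contribution from a neighbour patch $\Omega_j$ enters the penalty through $\alpha^{(j)}/h_j$, which when balanced against the local $h_i$-scaling produces the asymmetric factor $\alpha^{(j)}h_i/h_j$ in the second sum of (3.6a).

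Finally, (3.6b) is the triangle inequality $\|u_h-\Pi_h u\|_{dG}\le \|u-u_h\|_{dG}+\|u-\Pi_h u\|_{dG}$ combined with the Strang/C\'ea-type bound for the dG~IgA scheme (\ref{3}): coercivity of $a_h$ on $\mathbb{B}_h(\mathcal{T}_H)$ and continuity on the enlarged space $W_h^{l,p}$, together with Galerkin orthogonality, give $\|u-u_h\|_{dG}\le C\|u-\Pi_h u\|_{dG}$, and feeding (3.6a) into this yields the extra $\sum_i C_i h_i^{\delta(l,p,d)}\|u\|_{W^{l,p}(\Omega_i)}$ term. The main obstacle I anticipate is the careful tracking of the $h_i$--$h_j$ ratios on non-matching interfaces when estimating the penalty and consistency (flux) terms under $p<2$; the bookkeeping of the Sobolev-embedding exponent $\delta(l,p,d)$ through the trace inequality is where most of the technical work sits, and it is here that the restriction on $p$ becomes essential.
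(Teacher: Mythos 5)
The paper does not actually prove this lemma: the proof given in the text is a single line deferring to the reference \cite{LMMT:LangerToulopoulos:2014a}, so your proposal can only be compared against the standard argument carried out there. Your overall architecture (parametric quasi-interpolation estimate, pullback through $\mathbf{\Phi}_i$, conversion of the $W^{m,p}$ bounds into the $L^2$-based dG norm, scaled trace inequality for the penalty terms, and a coercivity/consistency argument for \eqref{3.6b}) is indeed the route taken in that reference.

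However, there is a genuine gap in your key step. You convert the $L^p$ bound into an $L^2$ bound by ``elementwise H\"older'': $|w|_{H^1(E)}\le |E|^{1/2-1/p}|w|_{W^{1,p}(E)}$ for $p\le 2$. H\"older's inequality goes the \emph{other} way: on a set of finite measure it bounds the weaker norm by the stronger one, i.e.\ $\|g\|_{L^p(E)}\le |E|^{1/p-1/2}\|g\|_{L^2(E)}$ for $p\le 2$; there is no reverse inequality for general functions, and $u-\Pi_h u$ is not a spline, so no inverse estimate applies to it either. The correct tool is the scaled Sobolev embedding $W^{l,p}(\hat E)\hookrightarrow W^{m,2}(\hat E)$ (equivalently, a Bramble--Hilbert argument on the reference element controlling the $W^{1,2}$ seminorm of the error by the full scale of $W^{j,p}$ seminorms, $1\le j\le l$), which after scaling produces exactly the exponent $\delta(l,p,d)=l-1+\frac{d}{2}-\frac{d}{p}$. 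This is also where the hypothesis $p>\max\{1,\frac{2d}{d+2(l-1)}\}$ genuinely enters: it is the condition for the embedding $W^{l,p}\hookrightarrow W^{1,2}$ to hold, not merely a cosmetic requirement that the final rate be positive, as your write-up suggests. The same correction is needed in your treatment of the penalty terms. A secondary imprecision: \eqref{3.6b} is not obtained from the plain triangle inequality plus a C\'ea bound in the dG norm, because the dG norm does not control the normal flux traces $\{\alpha\nabla(u-\Pi_h u)\}\cdot\mathbf{n}$ appearing in the consistency term $s_i$; the separate additive term $\sum_i C_i h_i^{\delta}\|u\|_{W^{l,p}(\Omega_i)}$ in \eqref{3.6b} is precisely the bound for those trace terms, which must be estimated directly from \eqref{3.5}.
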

\begin{proof}
	The proof is given in \cite{LMMT:LangerToulopoulos:2014a}.
	\hfill $\Square$
\end{proof}
 %%%%%%%%%%%%%%%%%%%%%%%%%%%%%%%%%%%%%%%%%%%%%%%%%%%%%%%%%%%%%%%
\begin{remark}
% \textcolor{red}{
If $h_i$ and $h_j$ are the grid sizes  of two adjacent
subdomains  $\Omega_i$ and $\Omega_j$, 
%then based on (\ref{3.2}),
%we have that $\sigma_{m,\zeta} \leq \frac{h_i}{h_j} \leq  \sigma_{M,\zeta}$,
then relations (\ref{3.2}) immediately yield the 
%bounds 
two-side estimate
 $\sigma_{m,\zeta} \leq {h_i}/{h_j} \leq  \sigma_{M,\zeta}$,
where the positive constants $\sigma_{m,\zeta}$ and $\sigma_{M,\zeta}$ only depend  
on the quantities which specify the initial zone partition $Z_\zeta$. 
Hence,  in what follows, estimate (\ref{3.6a}) 
will be  used in the form $\|u-\Pi_h u\|_{dG(\Omega)} \leq   \sum_{i=1}^N C_i  h_i^{\delta (l,p,d)} $.
%The analysis, that we present here, 
%can be generalized to more general problems where a non-matching grid approach is applied. 
We mention that the analysis presented here
can easily be extended to non-matching grids, 
see \cite{LMMT:LangerToulopoulos:2014a}.
We also note that the meshes 
$T^{({i_\zeta})}_{h_{i_\zeta}}(\Omega_{i_\zeta})$ satisfy the Assumption~\ref{assume1}.
%}
\end{remark}
%
%The advantage of Lemma \ref{lemma5.3}, is that gives  
We emphasize that Lemma~\ref{lemma5.3} provides
local estimates which hold in every subdomain $\Omega_i$. 
This help us to investigate
%study 
%(and prove) 
the accuracy of the method in every zone $Z_\zeta$  of $U_s$ separately.
We give an approximation estimate for the case where $u_r \in W^{l,2}(\Omega)$ with $l\geq k+1$, see (\ref{0.4}).
\par
   For all $\Omega_{i_\zeta}\in Z_{\zeta}$, 
 the local interpolation estimate
    (\ref{3.6a}) gives
     \begin{align}\label{3.7}
 	  \|u_s-\Pi_h u_s\|_{dG(U_s)} \leq \sum_{{i_\zeta}} h_{{i_\zeta}}^{\lambda} C_{{i_\zeta}},
     \end{align}
         since $u_s\in W^{l=2,p=\frac{2}{2-\lambda}}(\Omega)$. 
\begin{theorem}\label{Thrm1}
Let $Z_\zeta$ be a  zone partition of $\Omega$ with the properties 
listed in the previous section, and
let $T_{h_i}^{(i)}(\Omega_i)$ be the meshes of the subdomains as described in Section 3.1.  
Then, for the solution $u$ of (\ref{0.0a}), we have the 
%approximation % NOT approximation estimate !!!
error estimate
\begin{align}\label{3.8}
	\|u- u_h\|_{dG(\Omega)} \leq h^{r} C, \quad \text{with} \;
	r=\min\{k,{\lambda}/{\mu}\},
	%r=\min(k,\frac{\lambda}{\mu}),
\end{align}
where the constant $C>0$ is determined by the quasi uniform mesh properties, see (\ref{3.2}), and 
the constants $C_i$ of Lemma \ref{lemma5.3}.
\end{theorem}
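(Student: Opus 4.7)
The plan is to combine the Strang--C\'ea-type reduction already encapsulated in Lemma~\ref{lemma5.3} with the regularity splitting $u=u_r+u_s$ from~(\ref{0.4}), and then let the zone structure $Z_\zeta$ do its job. First, I would invoke estimates (\ref{3.6a})--(\ref{3.6b}) together with the triangle inequality. Since $u_h$ solves the discrete problem (\ref{3}), Lemma~\ref{lemma5.3} already performs the Galerkin step and reduces the error analysis to
\begin{equation*}
\|u-u_h\|_{dG(\Omega)} \leq C\sum_{i=1}^N C_i\, h_i^{\delta(l_i,p_i,d)}\,\|u\|_{W^{l_i,p_i}(\Omega_i)},
\end{equation*}
where the pair $(l_i,p_i)$ is chosen as prescribed after~(\ref{2.6a}): $p=2$ and $l=k+1$ on patches outside the singular cone, and $p=2/(2-\lambda)$, $l=2$ on patches intersecting $U_s$. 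This gives $\delta=k$ in the former and $\delta=\lambda$ in the latter.

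Second, I would treat the regular part $u_r$. Assuming the data is smooth enough that $u_r\in W^{k+1,2}(\Omega)$ in~(\ref{0.4}), the choice $(l,p)=(k+1,2)$ yields local contributions $h_i^k\|u_r\|_{W^{k+1,2}(\Omega_i)}$. Since the grading only refines $h_i$ relative to $h$ and never enlarges it, the sum collapses to $Ch^k\|u_r\|_{W^{k+1,2}(\Omega)}$, giving the first candidate for the $\min$ in~(\ref{3.8}).

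Third, I would handle the singular part $u_s$. The cutoff $\xi$ localises the support to $U_s$, so only the patches $\Omega_{i_\zeta}\subset Z_\zeta$ enter the sum, and on each of them $u_s\in W^{2,p}(\Omega_{i_\zeta})$ with $\delta=\lambda$. The central patch $\Omega_{i_0}$ touching $P_s$ contributes $h_{i_0}^\lambda\|u_s\|_{W^{2,p}(\Omega_{i_0})}\leq C h^{\lambda/\mu}$ directly by~(\ref{3.2a}). For $\zeta\geq 1$ I would combine the mesh-size bound $h_{i_\zeta}\leq C h D_{(Z_\zeta,P_s)}^{1-\mu}$ of~(\ref{3.2b}) with the weighted estimate of $\|u_s\|_{W^{2,p}(\Omega_{i_\zeta})}$ that follows from $|D^2u_s|\sim r^{\lambda-2}$ on an annulus of inner radius $\sim D_{(Z_\zeta,P_s)}$ and width $\sim R_{Z_\zeta}$. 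A direct calculation with $p=2/(2-\lambda)$ yields the zone--uniform bound $h_{i_\zeta}^\lambda\,\|u_s\|_{W^{2,p}(\Omega_{i_\zeta})}\leq C\,h^{\lambda/\mu}$, and since the number of zones $\zeta_M$ depends only on $L_U,\nu,\mu$ (not on $h$), summing produces the singular-part total $Ch^{\lambda/\mu}$.

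Putting the two contributions together yields $\|u-u_h\|_{dG(\Omega)}\leq C(h^k+h^{\lambda/\mu})=Ch^{\min\{k,\lambda/\mu\}}$, which is the claim. The main technical obstacle I foresee lies in the zone-wise calculation of the third step: with $p=2/(2-\lambda)$ sitting exactly at the boundary of integrability of $r^{\lambda-2}$, one must verify carefully that the annular scaling of $\|u_s\|_{W^{2,p}(\Omega_{i_\zeta})}$ in $D_{(Z_\zeta,P_s)}$ precisely cancels the factor $D_{(Z_\zeta,P_s)}^{(1-\mu)\lambda}$ produced by~(\ref{3.2b}), uniformly in $\zeta$, and that this uniformity survives the summation over zones. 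This is exactly the calculation that pins down the role of the grading exponent $\mu$ constructed in Section~\ref{apriorimesh} and justifies the choice $\mu\leq\lambda/k$ when one wants to recover the full rate $h^k$.
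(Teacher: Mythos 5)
Your overall architecture coincides with the paper's: triangle inequality plus (\ref{3.6a})--(\ref{3.6b}) from Lemma~\ref{lemma5.3}, the splitting $u=u_r+u_s$ of (\ref{0.4}), the bound $Ch^k$ for the regular part (the paper gets $C_1h^{k/\mu}+C_2h^k\le Ch^k$ using $\mu\le 1$), the estimate $h_{i_0}^{\lambda}\le Ch^{\lambda/\mu}$ on the central patch via (\ref{3.2a}), and the observation that the number of zones is independent of $h$. Your identification of $\delta=k$ for $(l,p)=(k+1,2)$ and $\delta=\lambda$ for $(l,p)=(2,\tfrac{2}{2-\lambda})$ is also exactly what the paper uses in (\ref{3.7}).

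The divergence, and the gap, is in the outer zones $\zeta\ge 1$. The paper's (\ref{3.13}) never invokes any decay of the local Sobolev norm of $u_s$: it keeps the (global) factor $\|u_s\|_{W^{2,p}(\Omega_{i_\zeta})}$ inside the constants $C_{i_\zeta}$ of (\ref{3.7}) and instead bounds the mesh size itself, $h_{i_\zeta}\le C\,hD_{(Z_\zeta,P_s)}^{1-\mu}\le C\,h\cdot h^{(1-\mu)/\mu}=Ch^{1/\mu}$, which is legitimate because every zone lies inside $U_s$ and $D_{(Z_\zeta,P_s)}=C(n_\zeta h)^{1/\mu}$ with $n_\zeta<L_U$ bounded, so all the $D_{(Z_\zeta,P_s)}$ are themselves $O(h^{1/\mu})$. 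You instead propose the classical Apel-type compensation: let the annular decay of $\|u_s\|_{W^{2,p}(\Omega_{i_\zeta})}$ cancel the factor $D_{(Z_\zeta,P_s)}^{(1-\mu)\lambda}$. As you yourself suspect, this ``direct calculation'' does not close at the exponent $p=2/(2-\lambda)$: there $|D^2u_s|^p\sim r^{(\lambda-2)p}=r^{-2}$, so $\int_{D_\zeta}^{D_{\zeta+1}}r^{-2}\,r\,dr=\log(D_{\zeta+1}/D_\zeta)$ and the annular $W^{2,p}$-seminorm of $u_s$ is essentially the same on every zone --- it provides no decay at all, so nothing cancels $D_{(Z_\zeta,P_s)}^{(1-\mu)\lambda}$. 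To rescue your route you would have to abandon the critical $p$ and argue with pointwise weighted bounds $|D^2u_s|\le Cr^{\lambda-2}$ on each annulus together with the element measure (the genuine FEM graded-mesh machinery), which is precisely the extra apparatus the paper's shortcut through (\ref{3.2}) and (\ref{3.7}) is designed to avoid. So the step you flag as ``the main technical obstacle'' is not a verification left to the reader; as formulated it fails, and the paper's proof takes a genuinely different (and simpler) path around it.
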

\begin{proof}
Let $\Pi_h u \in  \mathbb{B}_{h}(\cal{T}_H)$ be the quasi-interpolant of Lemma \ref{lemma5.3}.
Using the triangle  inequality, we obtain
\begin{align}\label{3.9}
 \|u-u_h\|_{dG(\Omega)} \leq \|u_h-\Pi_h u\|_{dG(\Omega)}  +	\|u-\Pi_h u\|_{dG(\Omega)}.
\end{align}
%
% Moreover, we get  by (\ref{0.4}) that  
Moreover, representation (\ref{0.4}) yields
 \begin{align}\label{3.10}
 	\|u-\Pi_h u\|_{dG(\Omega)} \leq \|u_s-\Pi_h u_s\|_{dG(\Omega)} + \|u_r-\Pi_h u_r\|_{dG(\Omega)}.
 \end{align}
Using the fact that $u_r\in W^{l\geq k+1,2}(\Omega)$,  Lemma \ref{lemma5.3}, 
the mesh properties (\ref{3.2}) and
%the fact  that 
inequalities
$ 0< \mu \leq 1$, 
we have
\begin{align}\label{3.11}
	\|u_r-\Pi_h u_r\|_{dG(\Omega)} \leq C_1 h^{\frac{k}{\mu}}+C_2 h^k \leq C h^{k},
\end{align}
where the constants $C_1$ and $C_2$ 
%in (\ref{3.11}) 
are determined by the constants that appear in (\ref{3.2}) and (\ref{3.6a}).
Therefore, it remains to estimate the first term in (\ref{3.10}). 
By (\ref{3.2a}) and (\ref{3.7}),  
we obtain  the   estimate 
\begin{align}\label{3.12}
\|u_s-\Pi_h u_s\|_{dG(Z_0)} \leq C  h_{i_0}^{\lambda}\leq C h^{\frac{\lambda}{\mu}}
\end{align}
in $Z_0$, 
where the constant  $C$  
%in (\ref{3.12}) 
is determined by the constants  in (\ref{3.2}) and (\ref{3.6a}).
For the subdomains belonging to the remaining zones of $Z_{\zeta},{\ }\zeta \neq 0$, 
%using  (\ref{3.2b}) and (\ref{3.7}), we can deduce
(\ref{3.2b}) and (\ref{3.7}) yield the estimates
%
%\begin{multline}\label{3.13}
%\|u_s-\Pi_h u_s\|_{dG(Z_\zeta)} \leq C_{\zeta \neq 0} h_{i_\zeta}^{\lambda}
%	\leq C_{\zeta \neq 0} \big( h D^{1-\mu}_{(Z_\zeta,P_s)}\big)^\lambda \leq \\
%	C_{\zeta \neq 0}\big(h h^{\frac{1-\mu}{\mu}}\big)^\lambda \leq 
%	C_{\zeta \neq 0} h^\frac{\lambda}{\mu}.	
%\end{multline}
\begin{align}\label{3.13}
\nonumber
 \|u_s-\Pi_h u_s\|_{dG(Z_\zeta)} 
&\leq C_{\zeta \neq 0} h_{i_\zeta}^{\lambda}
	\leq C_{\zeta \neq 0} \big( h D^{1-\mu}_{(Z_\zeta,P_s)}\big)^\lambda\\	  
&\leq	C_{\zeta \neq 0}\big(h h^{\frac{1-\mu}{\mu}}\big)^\lambda 
	\leq 	C_{\zeta \neq 0} h^\frac{\lambda}{\mu}.
\end{align}

Collecting  (\ref{3.11}), (\ref{3.12}) and (\ref{3.13}), 
%we derive the following interpolation estimate
we arrive at the interpolation error estimate
\begin{align}\label{3.14}
\|u-\Pi_h u\|_{dG(\Omega)} \leq C h^{r}, 
%{\ }\text{with}{\ }r=\min(k,\frac{\lambda}{\mu}).
\end{align}
with $r=\min\{k,{\lambda}/{\mu}\}$.
Now, inserting 
%the
estimate  (\ref{3.14}) into (\ref{3.9}) and recalling 
%the 
estimate (\ref{3.6b}),
we can easily derive the error estimate (\ref{3.8}). 
%\\
%\textcolor{blue}{Maybe is better to explain how is treated the second term in (\ref{3.6b})????????///}
\hfill $\Square$
\end{proof}

\section{Numerical examples}
In this section, we present a series of numerical examples in order to confirm the theoretical
results and to assess the effectiveness of the proposed grading mesh technique.  
The first examples concern two-dimensional problems with boundary point singularities
and with highly discontinuous coefficients.
In the  last examples, we consider applications of the method to three-dimensional problems 
with an interior singularity and in domains with singular edges 
having $\omega= 3\pi/2$ interior angle. 
The numerical examples have been 
%done 
performed
in 
\gismo \footnote{\emph{Geometry + Simulation Modules}, http://www.gs.jku.at}.
%(\gismo \footnote{\gismo: http://www.gs.jku.at}). 
%in this case the 3D L-shape  domain and 3D heart-shaped domain popularly known as the 
%Mozart kugel. In the final part of the numerical results, we will report on the Conjugate Gradient 
%with a Jacobi smoother which we have used in solving the linear system.
%
%For the solution of the dGIGA system, we use the Conjugate Gradient (CG) method with a Jacobi 
%(or diagonal) preconditioner.

%\subsection{{Implementation details, the construction of $T^{(i_{\zeta})}_{h_{i_{\zeta}},\widehat{\Omega}}$ }}
\subsection{Implementation details}

The grading of the mesh is done in the parameter domain. The
underlying assumption is that the given parameterization of the domain
has uniform speed
%UL: WHAT IS SPEED ?
%\textcolor{red}{speed} 
along the patch. An ideal situation is to have an
arc-length parameterization. Nevertheless, a well-behaving
parameterization is one whose speed is within a constant factor of the
arc-length parameterization. This is a reasonable assumption, also
because CAD software typically try to adhere to such a requirement,
since it is desirable for CAD operations as well.

\par
For constructing the graded parameter mesh, we choose a number of
interior knots in each parametric direction and we place the knots
according to the grading parameter and the location of the singular
point in parameter space. In Fig.~\ref{Fig:GradedBasis}(a), we show the 
one-dimensional B-spline basis on a graded mesh, and similarly in
In Fig. \ref{Fig:GradedBasis}(b), we present the two- dimensional 
B-spline basis on  the corresponding graded mesh.
If the location of the singular point is given in physical
coordinates, we invert the point to parameter space with a Newton
iteration, to map it back to parameter space.
\begin{figure}
   \begin{subfigmatrix}{2}% number of columns
   \subfigure[]{\includegraphics[scale=0.195]{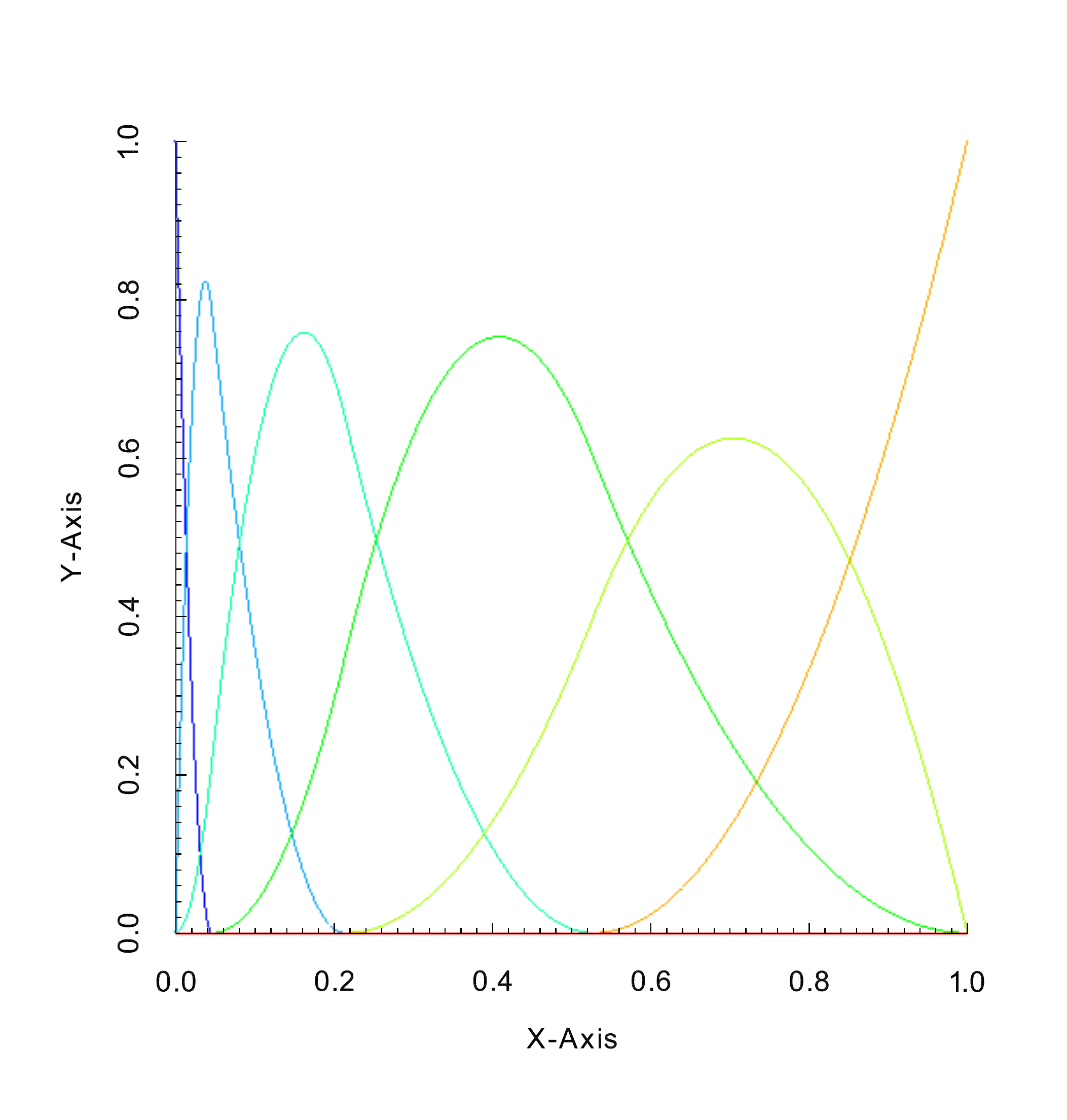}}
   \subfigure[]{\includegraphics[scale=0.195]{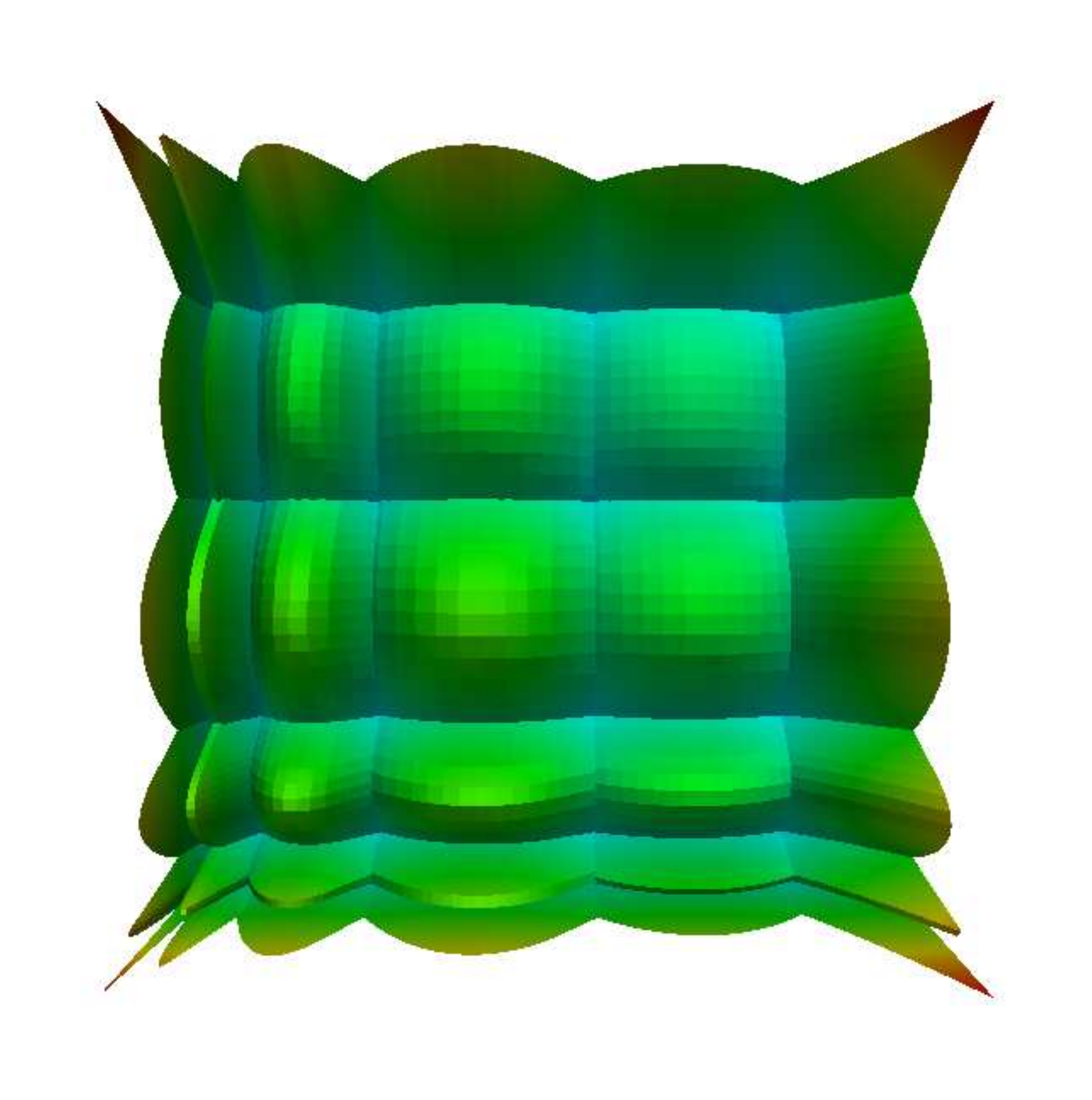}}
 \end{subfigmatrix}
   \caption{Basis functions on the graded mesh $T^{(i_{\zeta})}_{h_{i_{\zeta}},\widehat{\Omega}}$:  
         (a) The 1d bases on   $\mu=0.6$ grading, (b) The 2D bases on $\mu = 0.6$ grading.}
         \label{Fig:GradedBasis}
 \end{figure}
Under the assumption of a well-behaved B-spline geometry map, the
parameter mesh is transformed from
$T^{(i_{\zeta})}_{h_{i_{\zeta}},\widehat{\Omega}}$ to
$T^{(i)}_{h_i,\Omega_i}$ in such a way that the size of the physical
elements are proportional to their (pre-image) parametric
elements. This property ensures that our theoretical analysis applies
in the experiments that we conducted.
\par
%\textcolor{red}{Updated text:}
For efficiency reasons, the mesh $\cal{T}_H(\Omega)$ is created by the
grading function with the same number of knots as an equivalent
uniform mesh with grid size $h_i$ for each subdomain $\Omega_i$, but pulled towards the
singularity $P_s$ using the grading parameter $\mu$.  This strategy
satisfies Assumption~\ref{assume1} with a minimal number of knots.
This approach also mimics the zones construction introduced in
Subsection \ref{apriorimesh}, since it corresponds to a zone partition
that shrinks towards $P_s$ at every refinement step.
\par
In our experiments, we consider a mapping $\mathbf{\Phi}_i$ produced
on an initial knot vector $\mathbf{\Xi}_i^d$, which exactly represents
the subdomain $\Omega_i$, as the isogeometric paradigm suggests. The
knots are relocated during the grading procedure but without changing
the shape or the parameterization of the subdomains $\Omega_i$.  If
needed, the original coarse knots are inserted in the discretization
basis such that the exact representation of the original shape is feasible.
Nevertheless, in our implementation, we have the freedom to use a
different sequence of knots for the discretization space without
refining the initial geometry in this basis. 
%
%For computational efficiency reasons, the subdivision   is  created 
% on the ground of  having the least possible number of subdomains.
%Thus, for the $\Omega_i$ which touches  $P_s$, the knot points close to $P_s$ are 
%relocated in the grading way in order to generate a graded mesh regions in the $\Omega_{i_\zeta}$. 
%The knots away from $P_s$ are 
%distributed in a uniform way, for an illustration see Fig. \ref{Fig:GradedBasis}(a), with
%two graded mesh regions in the subdomain.  In the numerical examples,
%we will see that this is quite efficient way for generating  the
%$T^{({i_\zeta})}_{h_{i_\zeta}}(\Omega_{i_\zeta})$ grading meshes,  
%which provide 
%the desirable  convergence rates.
%\\
%In the applications, we consider a mapping $\mathbf{\Phi}_i$ produced on an initial  knot vector $\mathbf{\Xi}_i^d$, 
%which exactly represents  the subdomain $\Omega_i$. 
%The knots are relocated during the grading 
%procedure but 
%without changing the shape or the parametrization of the subdomains $\Omega_i$.
%
\subsection{Numerical Examples for Two dimensional}
% \begin{enumerate}
%  \item \textbf
% \subsubsection{Heart shaped domain.}
\subsubsection{Heart shaped domain}
   To illustrate the efficiency of the proposed 
    mesh grading methodology and to validate the estimates of 
    Section~\ref{sec:ProblemDescriptionAndDiscretization}, we consider the problem 
    \eqref{0.0} in a curved domain (heart shape) having a singular point $P_s$ (re-entrant corner) 
    with internal angle $\omega = 3\pi /2$, see  Fig.~\ref{fig:sec:heart}(a). 
 %   
 %   
     %-----------------------------------------------------------------
    \begin{figure}[bth!]
      \begin{subfigmatrix}{3}% number of columns
      \subfigure[]{\includegraphics[width=0.3\textwidth]{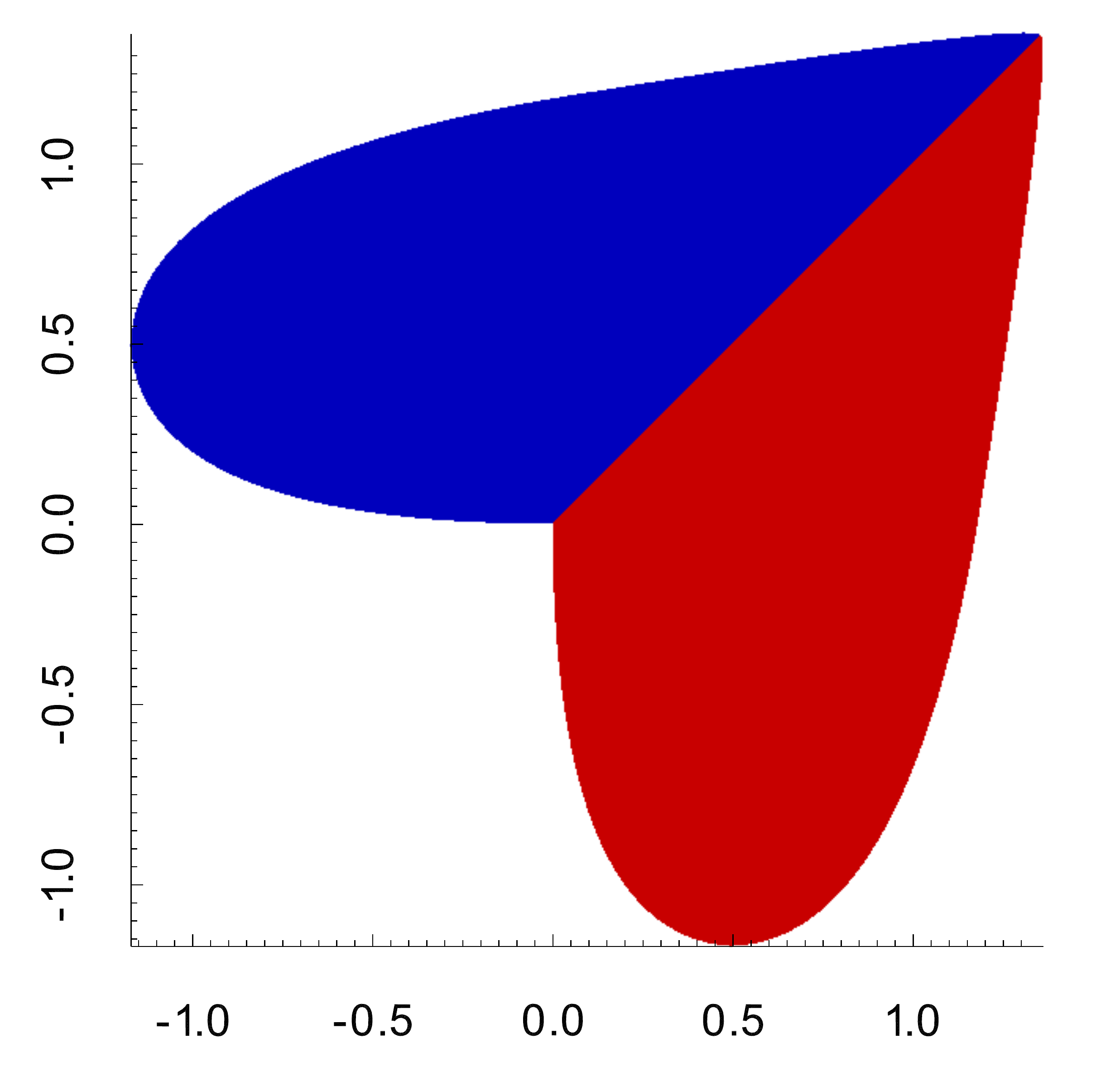}}
      \subfigure[]{\includegraphics[width=0.3\textwidth]{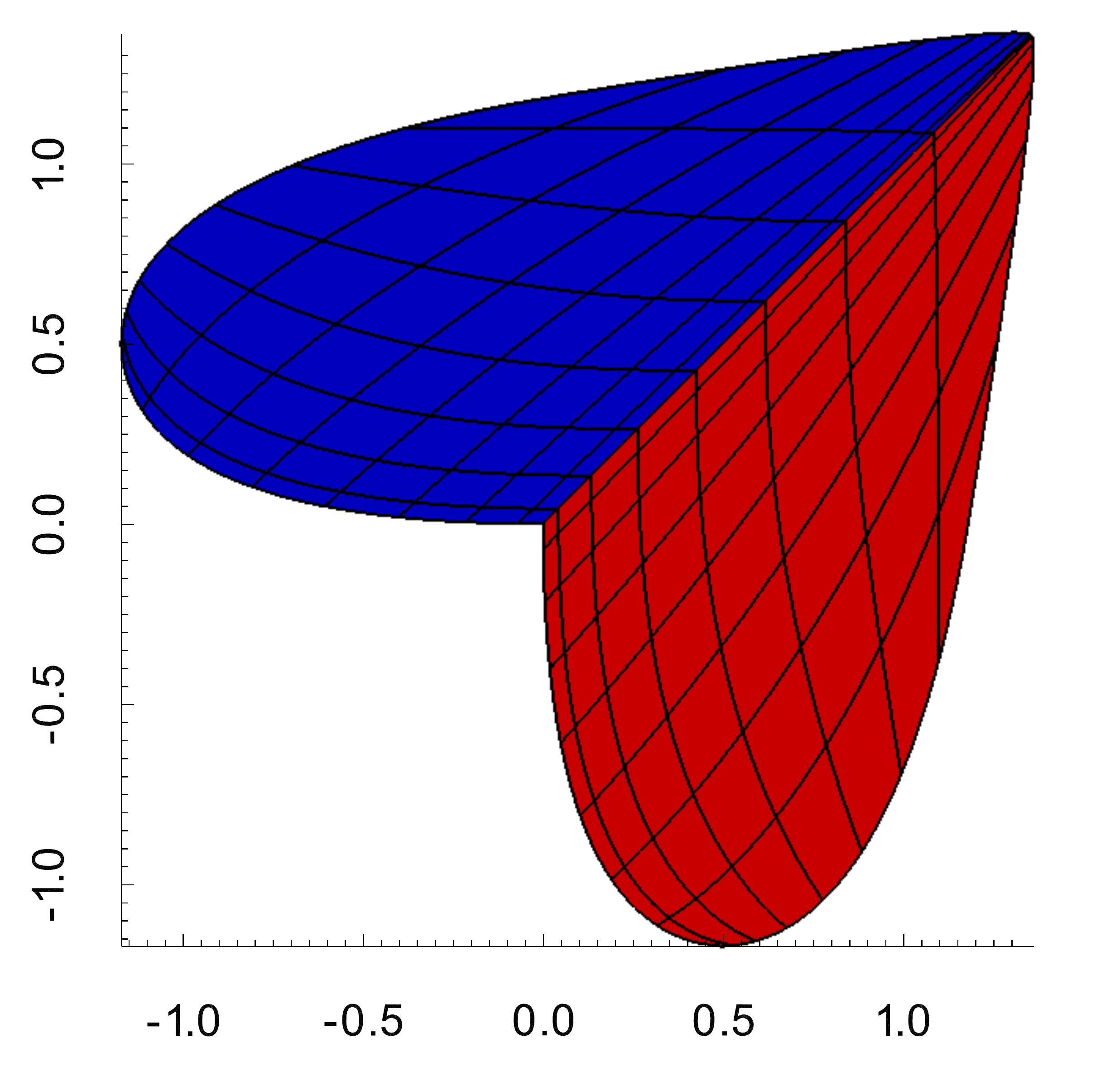}}
       \subfigure[]{ \includegraphics[width=0.3\textwidth]{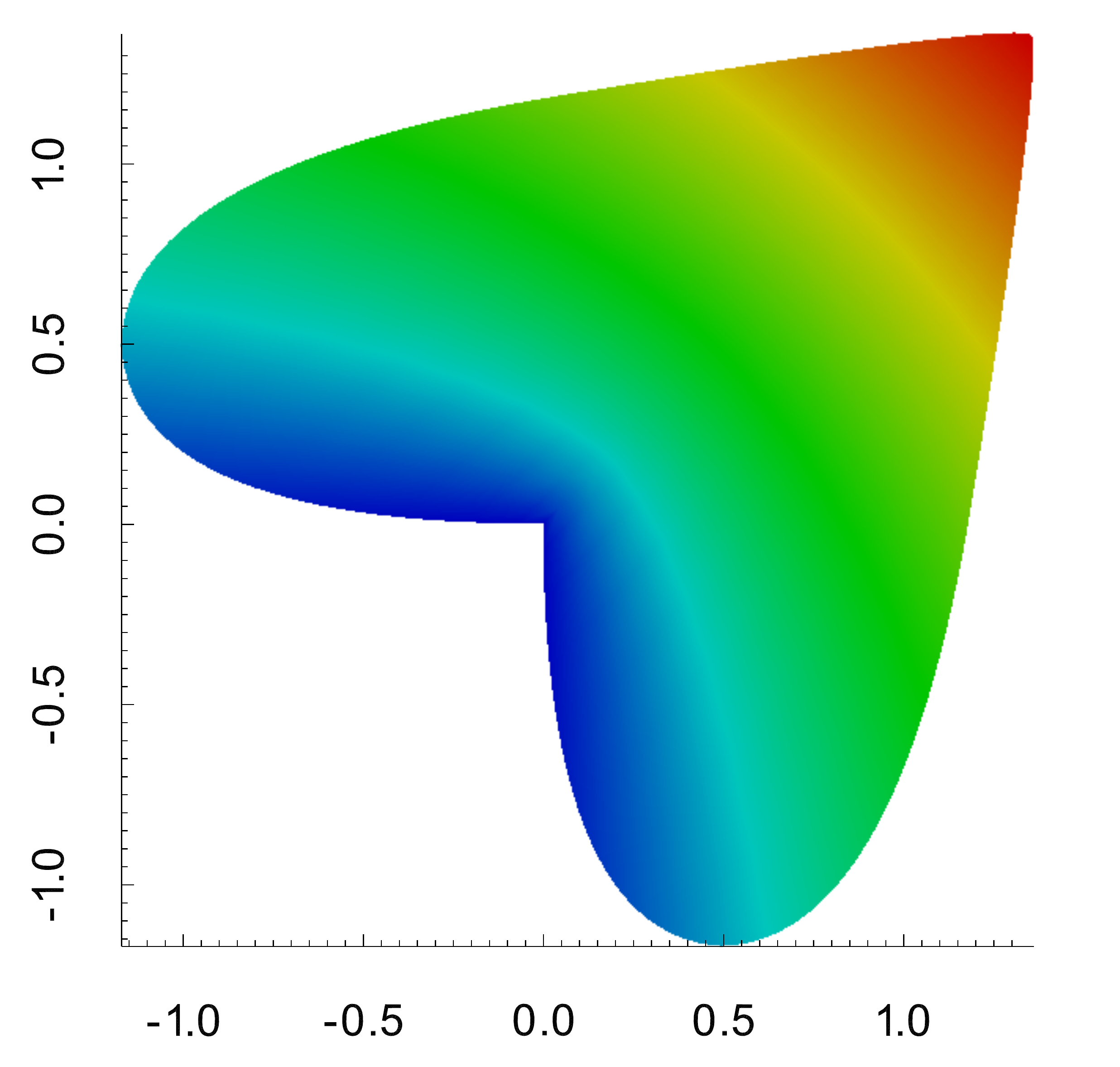}}
     \end{subfigmatrix}
	    \caption{Heart shape problem: (a) the computational domain and the subdomains, 
                                          (b) the graded meshes of the two subdomains,
                                          (c) the contours of $u_h$ solution. }
    \label{fig:sec:heart}
    \end{figure}
    
%---------------------------------------------------------------------
\begin{table}[htb!]
\centering %
\begin{tabular}{|l|l|l|l|} % creating eight columns
\hline %inserting double-line
 $j_1$&  $j_2$       &$B^{1}_{j}(j_1,j_2)$  &$B^{2}_{j}(j_1,j_2)$ \\ \hline
  $1$&  $1$         &$(0.00, 0.00)$    &$(0.00,0.00)$\\
  $2$&  $1$         &$(0.49,0.49) $    &$(0.49,0.49)$\\
  $3$&  $1$         &$(0.97,0.97) $    &$(0.97,0.97)$\\
  $1$&  $2$         &$(0.00,-0.81)$    &$(-0.81,0.00)$\\
  $2$&  $2$         &$(0.46,-0.16)$    &$(-0.16,0.46)$\\
  $3$&  $2$         &$(1.00,0.94) $    &$(0.94, 1.00)$\\
  $1$&  $3$         &$(0.35,-0.84)$    &$(-0.84,0.35)$\\
  $2$&  $3$         &$(0.71,-0.84)$    &$(-0.84,0.71)$\\
  $3$&  $3$         &$(0.85,0.042)$    &$(0.04 ,0.85)$\\ \hline
\end{tabular}
\caption{The control points for the two B-spline surfaces each with degree $k=2$ depicted 
         in Figure~\ref{fig:sec:heart}}% Control points corresponding to the two subdomains of the heart shape.}
\label{tab:sec:heart:controlpoints}
\end{table}

%-----------------------------------------------------------
\begin{table}[htb!]
\centering %
\begin{tabular}{|c|l|l|l|l|} % creating eight columns
\hline %inserting double-line
          &\multicolumn{2}{|l|}{without grading }
          &\multicolumn{2}{|l|}{with grading } \\ [0.5ex] \hline % inserts single-line
$h / 2^s$    &$k=1$& $k=2$&$\begin{matrix}
                                      k=1,\\ \mu=0.6
                                  \end{matrix}$  
                                &$\begin{matrix}
                                      k=2, \\ \mu=0.3
                                  \end{matrix}$ \quad
                                \\ \hline %\hline
        \multicolumn{5}{|c|} {Convergence rates } \\ [0.5ex]  \hline
 $s=0$  &-        & -       & -        &-         \\ %\hline% Entering row contents
 $s=1$  &0.671469 &0.68221  &0.843026  &1.49519   {\ }   \\ %\hline
 $s=2$  &0.678694 &0.67322  &0.894636  &1.85785   {\ }   \\ %\hline
 $s=3$  &0.677558 &0.669385 &0.921219  &2.02913   {\ }   \\ %\hline  % [1ex] adds vertical space
 $s=4$  &0.675018 &0.667797 &0.938709  &2.02562   {\ }   \\ %\hline
 $s=5$  &0.672622 &0.667156 &0.951475  &2.00987   {\ }   \\ \hline
\end{tabular}
\caption{Heart shape problem: The convergence rates in the dG-norm  $\|.\|_{dg(\Omega)}$  
	  with and without grading.}
\label{tab:sec:Heart}
\end{table}

%-------------------------------
The computational domain $\Omega$ consists of two subdomains shown 
    in Fig.~\ref{fig:sec:heart}(a), where the corresponding
    knot vectors are  $\mathbf{\Xi}_i^2=(\Xi_{i}^1,\Xi_{i}^2),i=1,2$ with $\Xi_{i}^1=\Xi_{i}^2= \{0, 0, 0, 1, 1, 1\}$ 
    and are parametrized by $k=2$  B-spline  basis with the control points given in 
    Table~\ref{tab:sec:heart:controlpoints}.    
    The exact solution is given by $u = r^\frac{\pi}{\omega}\sin( \theta \pi / \omega)$
    with $f$ and $u_D$ in \eqref{0.0}
    are specified by the exact solution. We set $\alpha=1$ in the entire $\Omega$.  
      Note that 
      %$u \in W^{\frac{3}{2},2}(\Omega)$.
    $u \in W^{1.5,2}(\Omega)$.
    The problem has been solved using first ($k=1$) and second ($k=2$) order 
    B-spline  spaces with grading parameter $\mu=0.6$ and $\mu=0.3$, respectively, 
    see Fig.~\ref{fig:sec:heart}(b). We plot the contours of the solution $u_h$ 
    %solution 
    computed using $k=2$  B-splines in Figure~\ref{fig:sec:heart}(c).
    In Table~\ref{tab:sec:Heart}, we display the convergence rate of the error. 
    In the left column (without grading), we present the rates %without using graded meshes,
    using quasi-uniform meshes.   
    In the right column of the table, we show the rates in the case of using graded meshes. As the theory predicts,  
    the convergence rates in left columns of both cases $k=1$ and $k=2$ are mainly determined by the regularity of the solution.  
    One the other hand, the rates which correspond to the graded meshes tend to be optimal
    with respect to the  B-spline  degree $k$. 
    This shows the adequacy of the proposed graded mesh for solving this problem. 
%
%
%%%%%%%%%%%%%%%%%%
%
%\subsubsection{Kellogg's Problem (discontinuous diffusion  coefficients).}
\subsubsection{Kellogg's Problem.}
It is known that the solutions of problem (\ref{0.0}) with rough diffusion coefficients
may not be very smooth. Thus, standard numerical method can not provide 
an (optimal) accurate approximation 
\cite{LMMT:RICHARD_FALKAND_OSBORN_MixedEll_1994}. 
We examine such  a case by solving the so-called Kellogg test problem 
\cite{LMMT:Kellog_DiscDifCoef_1975}.
We consider the computational domain $\Omega=(-1,1)^2$. 
The  diffusion coefficient $\alpha$ in (\ref{0})
%(see (\ref{0})) is
is supposed to be piecewise constant taking the same value, say $\alpha:=\alpha_{13}$,  
in the first and third quadrants, and, similarly,  
for the second and   fourth  quadrants,
$\alpha:=\alpha_{24}$.
\begin{figure}[thb!]
          \begin{subfigmatrix}{3}
	   \subfigure[]{  \includegraphics[width=0.3\textwidth, height=0.22\textheight]{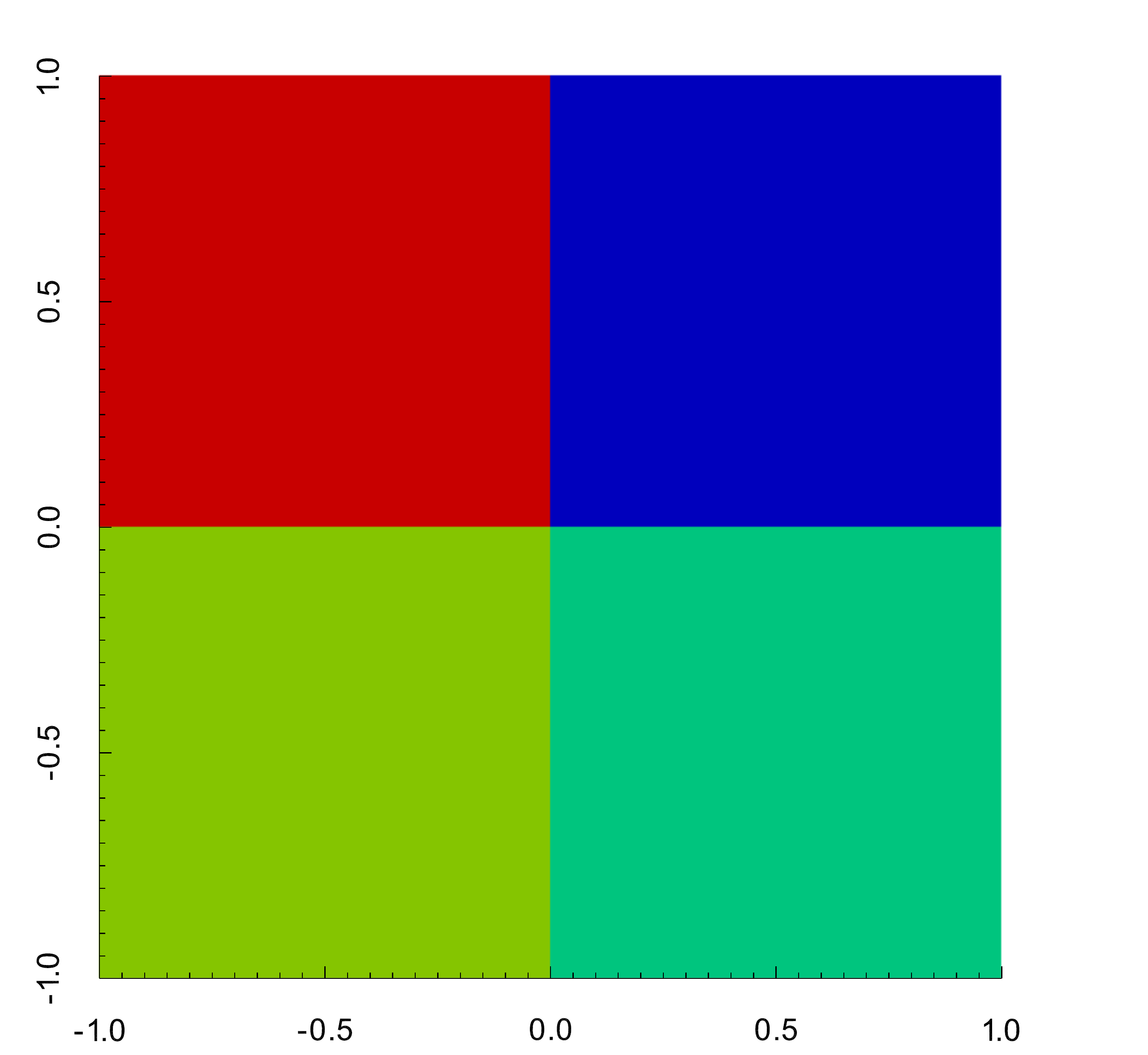}}
	    \subfigure[]{ \includegraphics[width=0.3\textwidth, height=0.22\textheight]{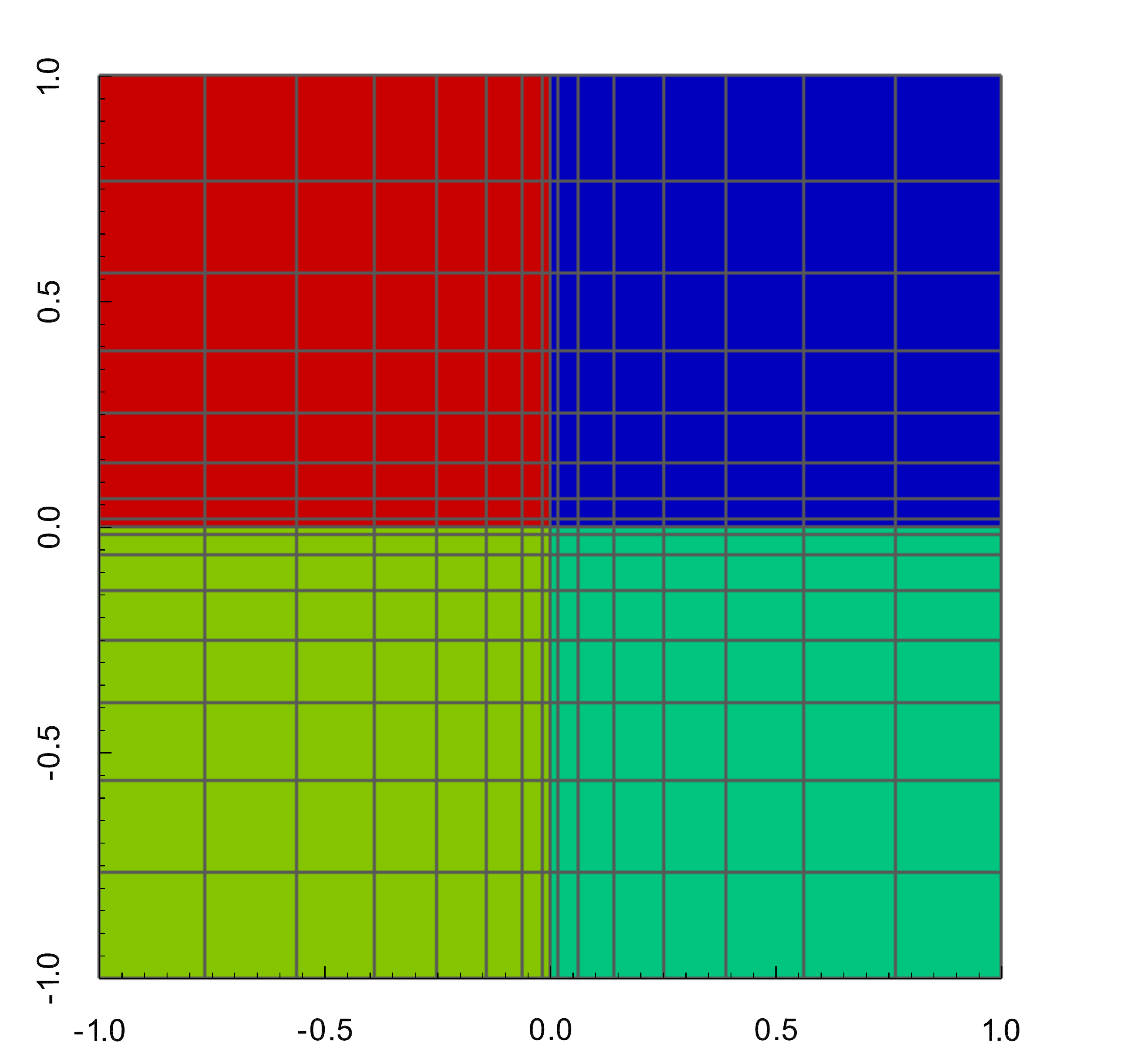}}
	   \subfigure[]{  \includegraphics[width=0.3\textwidth, height=0.22\textheight]{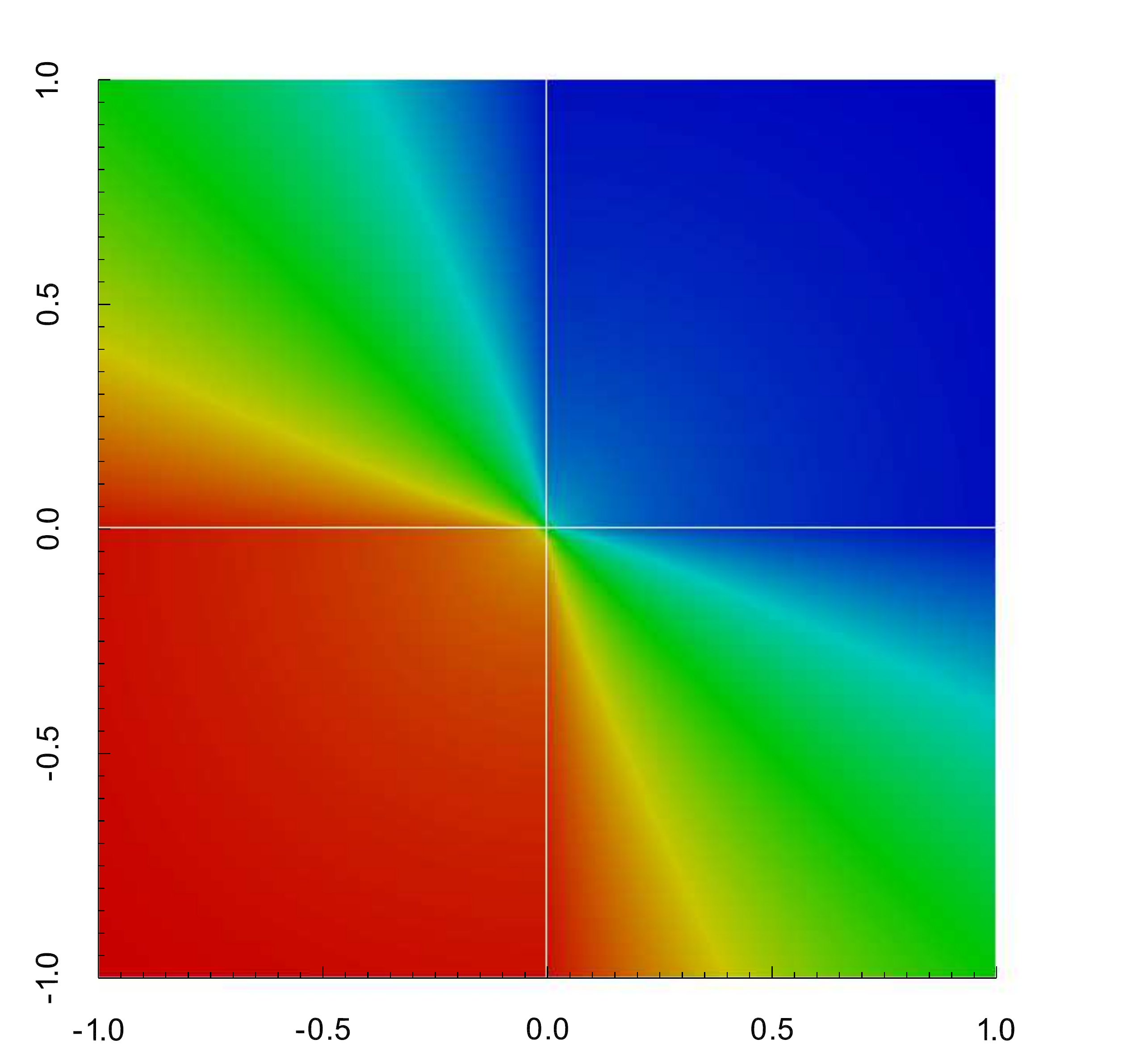}}
	   \end{subfigmatrix}
	    \caption{Kellogg's test problem: (a) The computational domain and the four subdomains, 
				     (b)  the graded meshes of the four subdomains,
				     (c)  the contours of $u_h$ solution.}
	      \label{Fig_Kellog_test}
\end{figure}
This choice of the diffusion coefficient leads to the subdivision of
$\Omega$ in the four subdomains as it is shown in Fig.~\ref{Fig_Kellog_test}(a). 
The exact solution of the problem  for $f=0$ is given in polar coordinates
         by $u(r,\theta) = r^{\lambda}\varphi(\theta)$, where 
\begin{align*}%\label{NumTes_1}
          \varphi(\theta) = \begin{cases}
                         \cos( (\pi /2 -\sigma)\lambda) \cos((\theta-\pi/2+\rho)\lambda), & \text{if }  0\leq   \theta < \pi/ 2, \\
                         \cos(\rho \lambda) \cos((\theta-\pi+\sigma)\lambda),            & \text{if } \pi/2 \leq  \theta < \pi, \\
                         \cos(\sigma \lambda) \cos((\theta-\pi-\rho)\lambda),            & \text{if }  \pi\leq \theta < 3\pi /2   , \\
                         \cos( (\pi/2 -\rho)\lambda) \cos((\theta-3\pi/2-\sigma)\lambda),& \text{if } 3 \pi /2 \leq \theta \leq 2\pi , \\
                         \end{cases}
\end{align*}
where the numbers $\lambda, \rho, \sigma$ satisfy the nonlinear relations 
         \begin{align*}%\label{NumTes_2}
          \begin{cases}
                 \mathcal{R}:=\frac{\alpha_{13}}{\alpha_{24}} = -\tan((\pi/2 -\sigma)\lambda) \cot(\rho\lambda),\\
                 \frac{1}{\cal{R}}=-\tan(\rho\lambda)\cot(\sigma\lambda),\\
                 \cal{R}=-\tan(\sigma \lambda)\cot((\pi/2-\rho)\lambda),\\
                 0<\lambda<2,\\
                 \max\{0,\pi\lambda-\pi\}<2\lambda\rho<\min\{\pi\lambda,\pi\},\\
                 \max\{0,\pi- \pi\lambda\}<-2\lambda\sigma<\min\{\pi,2\pi-\lambda\pi\}.
          \end{cases}
\end{align*}

%------------------------------------------
          For $\lambda=0.4$, the solution 
	  %regularity is 
	  $u\in W^{1.4,2}(\Omega)$,
	  and
          has discontinuous derivatives 
	  %along 
	  across
	  the interfaces. 
	  On the other hand, 
	  %$u\in W^{l=2,p=1.25}(\Omega)$
	  $u\in W^{2,1.25}(\Omega)$,
          and the estimates presented in Section~3.3 can be applied. 
          We solved the problem using B-spline spaces with degrees
          $k=1$ and $k=2$ on uniform meshes. We performed again
          the test using graded meshes with grading parameter chosen
          such that $\lambda / \mu =k $, see (\ref{3.8}).  In
          Fig.~\ref{Fig_Kellog_test}(b), we can see the graded meshes
          of the subdomains. 
	  %and in Fig. \ref{Fig_Kellog_test}(c), we
	  Fig.~\ref{Fig_Kellog_test}(c) shows the 
          plot of the contours of the dG solution $u_h$ computed for degree $k=1$
          B-splines.  In Table \ref{Table_Kellog_test}, we display the
          convergence rates of the solution.  
	  We observe that, in the case of  uniform meshes,
	  the experimental order of convergence of the method
          is $0.4$ which is determined by  the regularity of the solution.
          Conversely, the rates in the right columns which correspond
          to the results using mesh grading tend to be optimal with
          respect the order of the B-spline space.
%-------------------------------------
%\begin{table}[hutb!]
%    \centering %
%      \begin{tabular}{|c|l|l|l|l|} % creating eight columns
%      \hline %inserting double-line
%		&\multicolumn{2}{|l|}{without grading }
%		&\multicolumn{2}{|l|}{with grading } \\ [0.5ex] \hline % inserts single-line
%      $\frac{h}{2^s}$    &$k=1$& $k=2$&$\begin{matrix}
%					    k=1,\\ \mu=0.10
%					\end{matrix}$  
%				      &$\begin{matrix}
%					    k=2, \\ \mu=0.20
%					\end{matrix}$ \quad
%				      \\ \hline %\hline
%	      \multicolumn{5}{|c|} {Convergence rates } \\ [0.5ex]  \hline
 %     $s=0$  &-        & -        & -        &-         \\ %\hline% Entering row contents
  %    $s=1$  &0.190735 &0.318847  &0.830217  &-1.49519   {\ }   \\ %\hline
   %   $s=2$  &0.175631 &0.312399  &0.858329  &-1.85785   {\ }   \\ %\hline
    %  $s=3$  &0.163671 &0.308165  &0.879976  &-2.02913   {\ }   \\ %\hline  % [1ex] adds vertical space
%      $s=4$  &0.154033 &0.305381  &0.895984  &-2.02562   {\ }   \\ %\hline
%      $s=5$  &0.146134 &0.303548  &0.906179  &-2.00987   {\ }   \\ \hline
%      \end{tabular}
%      \caption{Kellogg's test: The convergence rates of the $\|.\|_{dG(\Omega)}$ error with and without grading.}
%      \label{Table_Kellog_test_1}
%\end{table}
%
%------------------------------------------
\begin{table}[hutb!]
    \centering %
      \begin{tabular}{|c|l|l|l|l|} % creating eight columns
	\hline %inserting double-line
		&\multicolumn{2}{|l|}{without grading }
		&\multicolumn{2}{|l|}{with grading } \\ [0.5ex] \hline % inserts single-line
      $h / 2^s$    &$k=1$& $k=2$&$\begin{matrix}
					    k=1,\\ \mu=0.40
					\end{matrix}$  
				      &$\begin{matrix}
					    k=2, \\ \mu=0.20
					\end{matrix}$ \quad
				      \\ \hline %\hline
	      \multicolumn{5}{|c|} {Convergence rates } \\ [0.5ex]  \hline
      $s=0$  &-        & -         & -        &-         \\ %\hline% Entering row contents
      $s=1$   &0.655814&  0.591165&0.830217  &0.477657  {\ }   \\ %\hline
      $s=2$   &0.354865&  0.355586&0.858329  &1.15442  {\ }   \\ %\hline
      $s=3$   &0.368103&  0.378796&0.879976  &1.78696   {\ }   \\ %\hline  % [1ex] adds vertical space
      $s=4$   &0.378375& 0.385672 &0.895984  &1.84425   {\ }   \\ %\hline
      $s=5$   &0.385464&  0.390348&0.906179  &1.95223   {\ }   \\ \hline
      \end{tabular}
      \caption{Kellogg's test: The convergence rates in the dG-norm  $\|.\|_{dg(\Omega)}$.}
      \label{Table_Kellog_test}
\end{table}
%---------------------------------------------------------------     
A glimpse of the discrete solution $u_h$ is given in Fig.~\ref{Fig:Kelloggs}.
\begin{figure}
    \centering
    \includegraphics[scale=0.195]{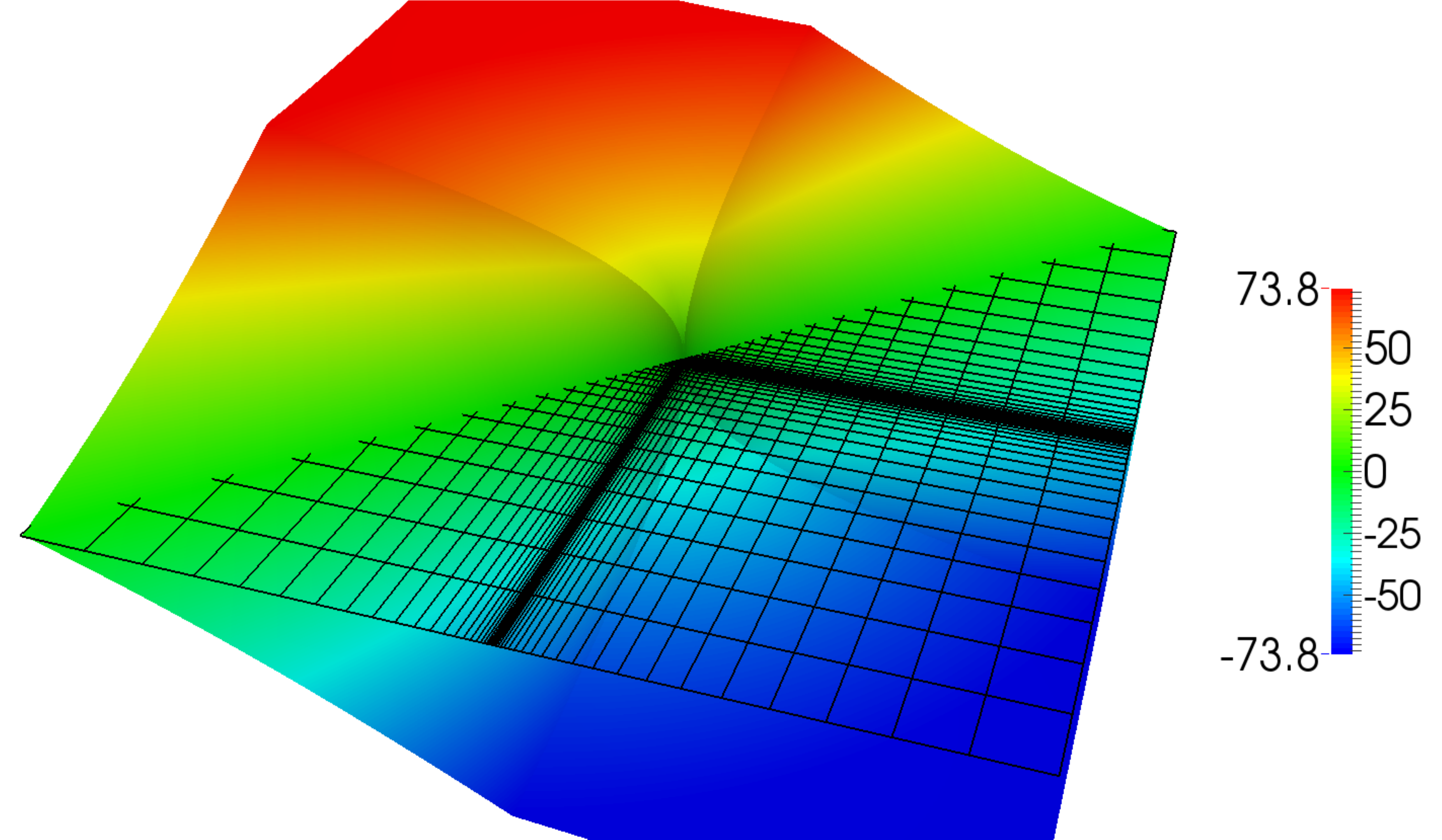}
   %\caption{Discrete solution of Kellogg's problem plotted over the graded mesh that it is computed on.}
\caption{Discrete solution of Kellogg's problem plotted over the graded mesh.}
\label{Fig:Kelloggs}
\end{figure}
%
%--------------------------------------------------------------- 
% \end{enumerate}
%
%-----------------------------------------------------
\subsection{Three dimensional examples}

% \begin{enumerate}
%  \item \textbf
 \subsubsection{Cube with interior point singularity.}
   This test case is inspired  by \cite{LMMT:LDG_pLaplace_IT_2014}. 
  The computational domain is $\Omega=(-1,1)^{3}$ which is decomposed into 8 subdomains, 
  see  Fig.~\ref{fig:sec:cube}(a). 
  We choose the diffusion coefficient $\alpha = 1$ in the whole computational domain $\Omega$.
%The knot vectors representing each decomposed geometry or patch is 
%$\Xi_{1, 2, 3}= (0, 0, 1, 1)$ in both directions.  
      The  solution of the problem has a singular point at the origin of the axis and is given by
      $u(x)=|x|^{\lambda}$ with $\lambda=0.85$.  It is easy to show that $u\in W^{l=2,2}(\Omega)$. 
      We solved the problem using $k=1, {\ }k=2$ and $k=3$ B-spline spaces on 
       quasi-uniform meshes. 
      In  Fig.~\ref{fig:sec:cube}(b), we plot the contours of  solution $u_h$ computed by $k=1$ B-spline space.  
       The convergence rates of the error corresponding to non-graded meshes 
       are shown in left columns of Table~\ref{tab:sec:cube}.
       
       %--------------------------------------------------------------
      \begin{figure}[bth!]
       \begin{subfigmatrix}{3}
       \subfigure[]{\includegraphics[width=0.32\textwidth]{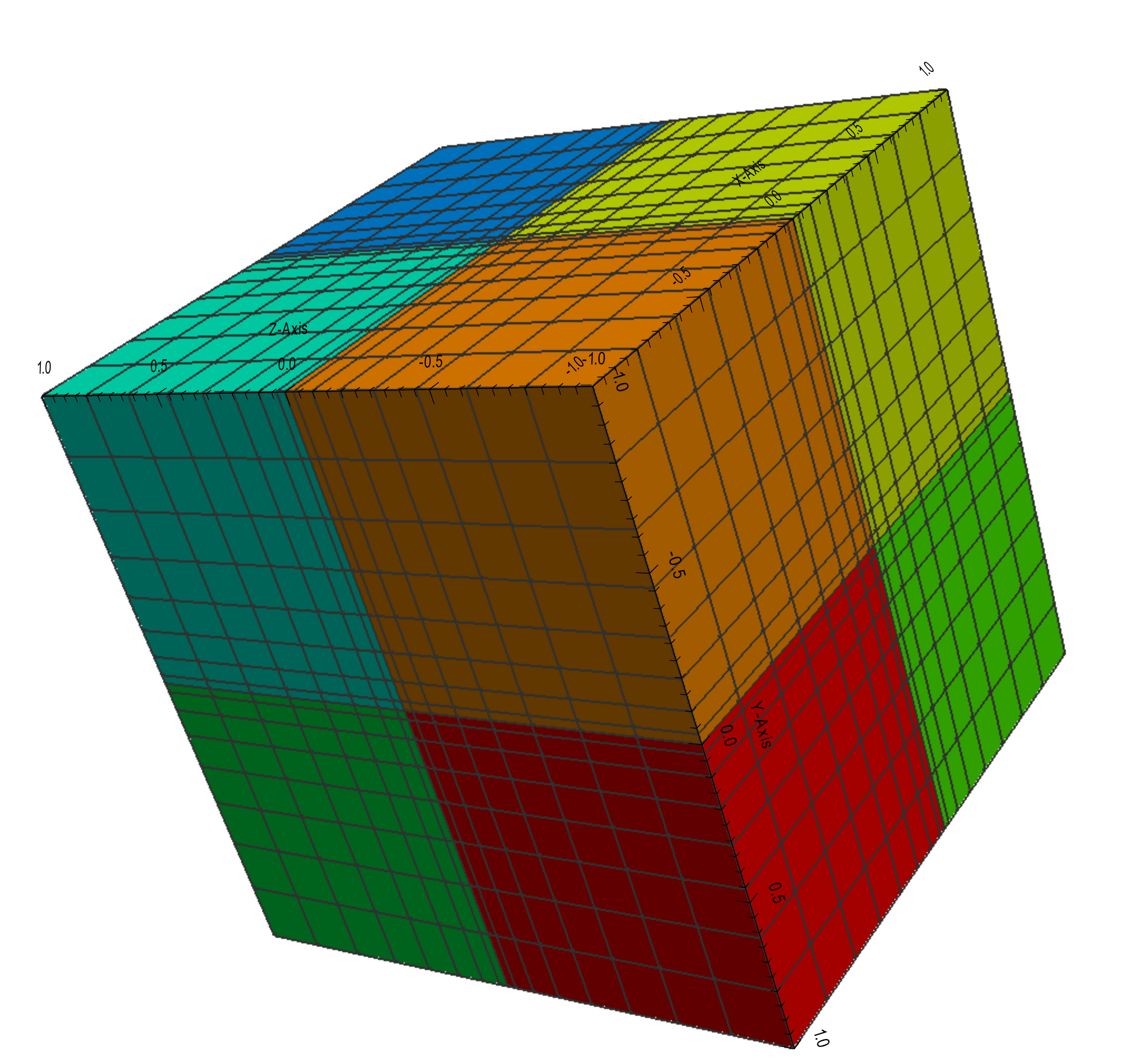}}
       \subfigure[]{\includegraphics[width=0.32\textwidth]{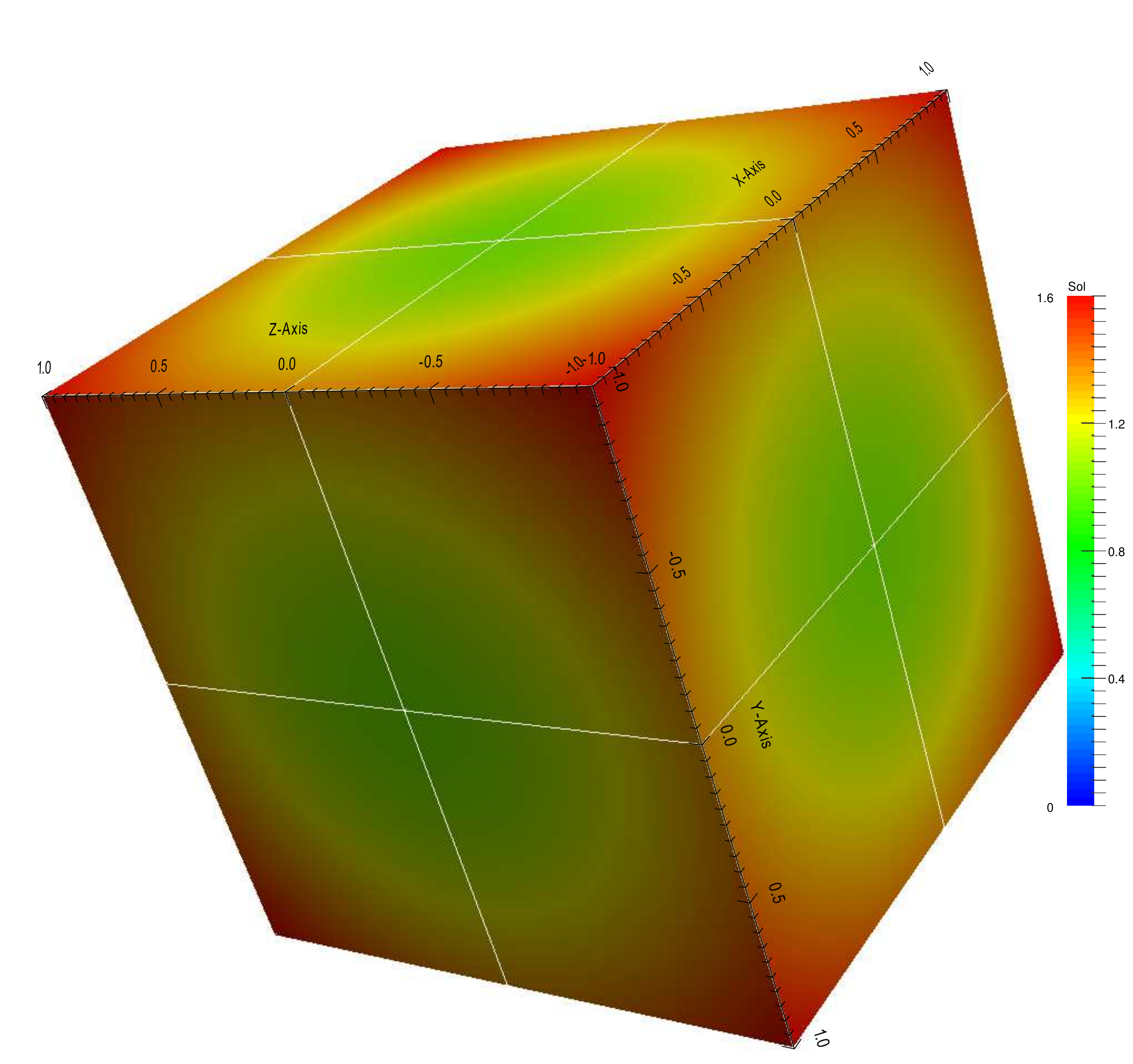}}
         \subfigure[]{\includegraphics[width=0.32\textwidth]{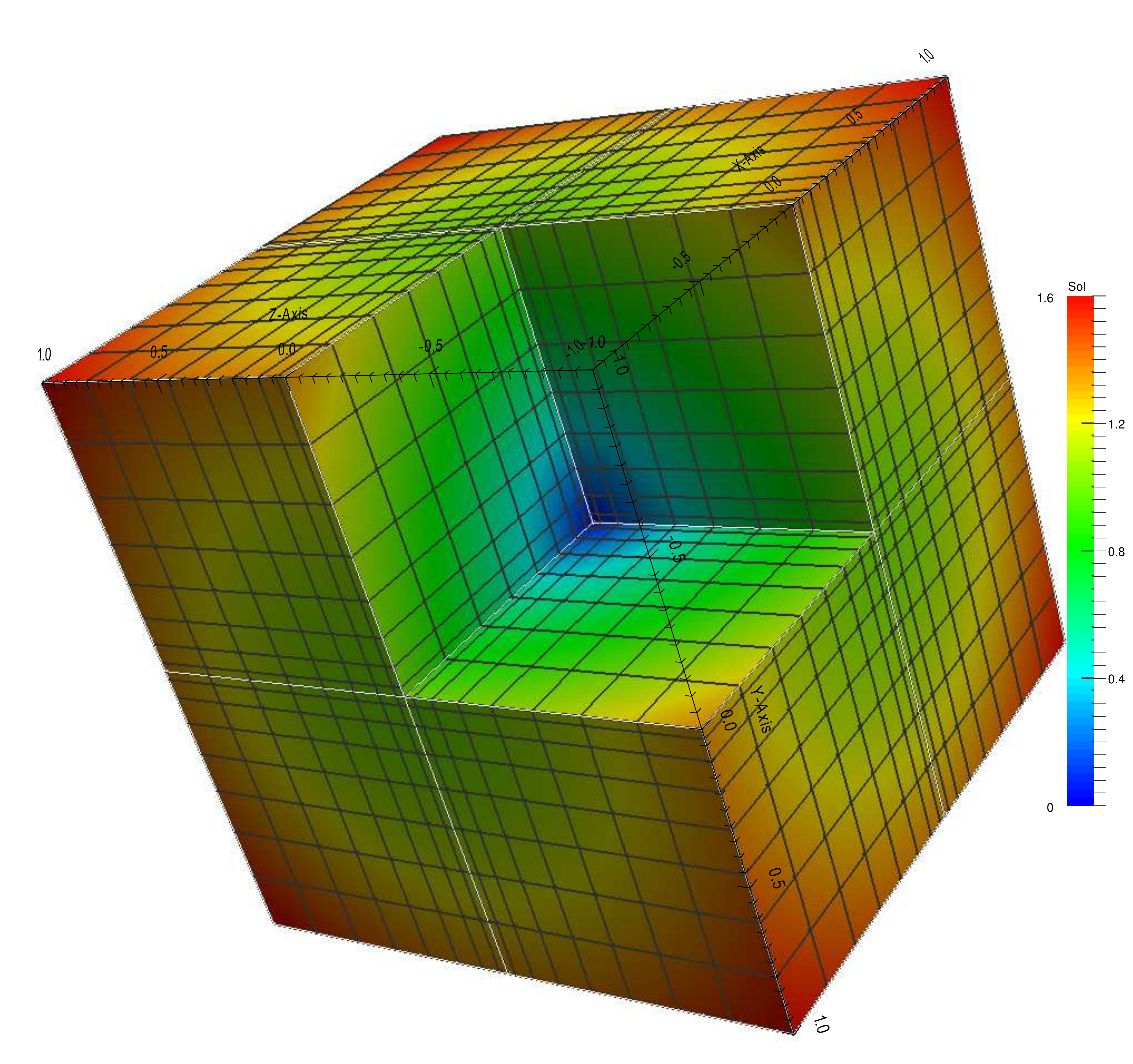}}
       \end{subfigmatrix}
	 \caption{Cube with interior singularity: (a) the decomposition of $\Omega$ into 8 subdomains 
	                                              with the graded meshes of the subdomain,
	                                         (b) the contours  of the solution $u_h$, 
	                                         (c) the variance of the $u_h$ contours around the singular point.}
      \label{fig:sec:cube}
      \end{figure}
      
       %------------------------------------------------------------------
       The rates are optimal for $k=1$ B-spline  space and sub-optimal for the two other B-spline  spaces, as 
       it was expected according to the regularity of the solution $u$. Note that the rates 
       presented on the left columns in Table \ref{tab:sec:cube} are in agreement with the estimate 
       given in (\ref{3.6a}). We have performed again the test 
       using grading meshes for the last two B-spline  spaces. The grading parameter $\mu$ has been chosen to be 
       $\delta(l=2,p=2,d=3) / \mu = k$, see Lemma \ref{lemma5.3} and (\ref{3.8}). 
       In  Fig.~\ref{fig:sec:cube}(c), the variate of the $u_h$ contours around the singular point 
       is  shown. The rates obtained on graded meshes  are displayed in right columns of Table \ref{tab:sec:cube}.

%UL: THIS WAS TOTALLY MISPLACED !!!!
%---------------------------------------------------------------     
%A glimpse of the discrete solution is given in Fig.~\ref{Fig:kelloggs}.
%
%\begin{figure}
%    \centering
%    \includegraphics[scale=0.195]{kelloggs}
%   \caption{Discrete solution of the Kelloggs problem plotted over the graded mesh that it is computed on.}
%         \label{Fig:Kelloggs}
% \end{figure}
%--------------------------------------------------------------- 

%%%%%%%%%%%%%%%%%%%%%%%%%%%%%%%%%%%%%%%%%%%%
       \begin{table}[hutb!]
	 \centering %
	  \begin{tabular}{|c|l|l|l|l|l|l|} % creating eight columns
	    \hline %inserting double-line
		    &\multicolumn{3}{|l|}{without grading }
		    &\multicolumn{3}{|l|}{with grading } \\ [0.5ex] \hline % inserts single-line
	  $h /2^s$    &$k=1$& $k=2$&$k=3$ &$\begin{matrix}
						  k=1,\\ \mu= 1.0
					          \end{matrix}$  
					          &$\begin{matrix}
						  k=2, \\ \mu=0.6
					          \end{matrix}$ 
					          &$\begin{matrix}
						  k=3, \\ \mu=0.4
					          \end{matrix}$ \quad
					  \\ \hline %\hline
		  \multicolumn{7}{|c|} {Convergence rates } \\ [0.5ex]  \hline
	  $s=0$  &-     & -     &-      & -     &-       &-     \\ %\hline% Entering row contents
	  $s=1$  &0.593 &1.066  &0.687  &0.593  &1.393   &0.791  {\ }   \\ %\hline
	  $s=2$  &0.839 &1.306  &1.234  &0.839  &1.766   &1.870  {\ }   \\ %\hline
	  $s=3$  &0.917 &1.340  &1.343  &0.917  &1.928   &2.942  {\ }   \\ %\hline  % [1ex] adds vertical space
	  $s=4$  &0.953 &1.346  &1.350  &0.953  &1.959   &3.080  {\ }   \\ %\hline
	  $s=5$  &0.972 &1.348  &1.350  &0.972  &1.974   &3.066  {\ }   \\ \hline
	  \end{tabular}
	  \caption{Cube with interior singularity: The convergence rate of the error on uniform and graded meshes.}
		   \label{tab:sec:cube}
	  \end{table}

%-----------------------------------------------------------
       We can observe that the rates approach the optimal rate for both high-order  B-spline  spaces. 
       This numerical example
       demonstrates that the dG IgA method applied on the proposed graded  meshes  can exhibit 
       optimal convergence rates for interior singularity 
       type  problems as well. 

%---------------------------------------------------------------      
%     \item \textbf
    \subsubsection{Three-dimensional L-shape domain.}
Now the computational domain $\Omega$ has 3d L-shape form and is 
  %described  by 
given by
    $\left( (-1, 1)^{2} \setminus (-1, 0)^{2} \right) \times [0,1]$.
    Even though the ''L-shape`` example has been mostly studied in the literature in its two-dimensional set up,
    (see for example anisotropic 2d meshes for IgA discretizations in \cite{LMMT:SangalliaNurbs2012}), 
    we believe that it  is an interesting test case, because we will see  that the graded mesh of the plane can be 
    prolonged  in a direction perpendicular to the singular edge for treating the boundary singularities.
    Note that in this three dimensional setting, the domain includes both corner and edge singularities, 
    see Fig. \ref{fig:sec:lshape3d}(a).  
    
    %-----------------------------------------------------------------
    \begin{figure}[bth!]
      \begin{subfigmatrix}{3}% number of columns
    \subfigure[]{ \includegraphics[width=0.32\textwidth, height=0.22\textheight]{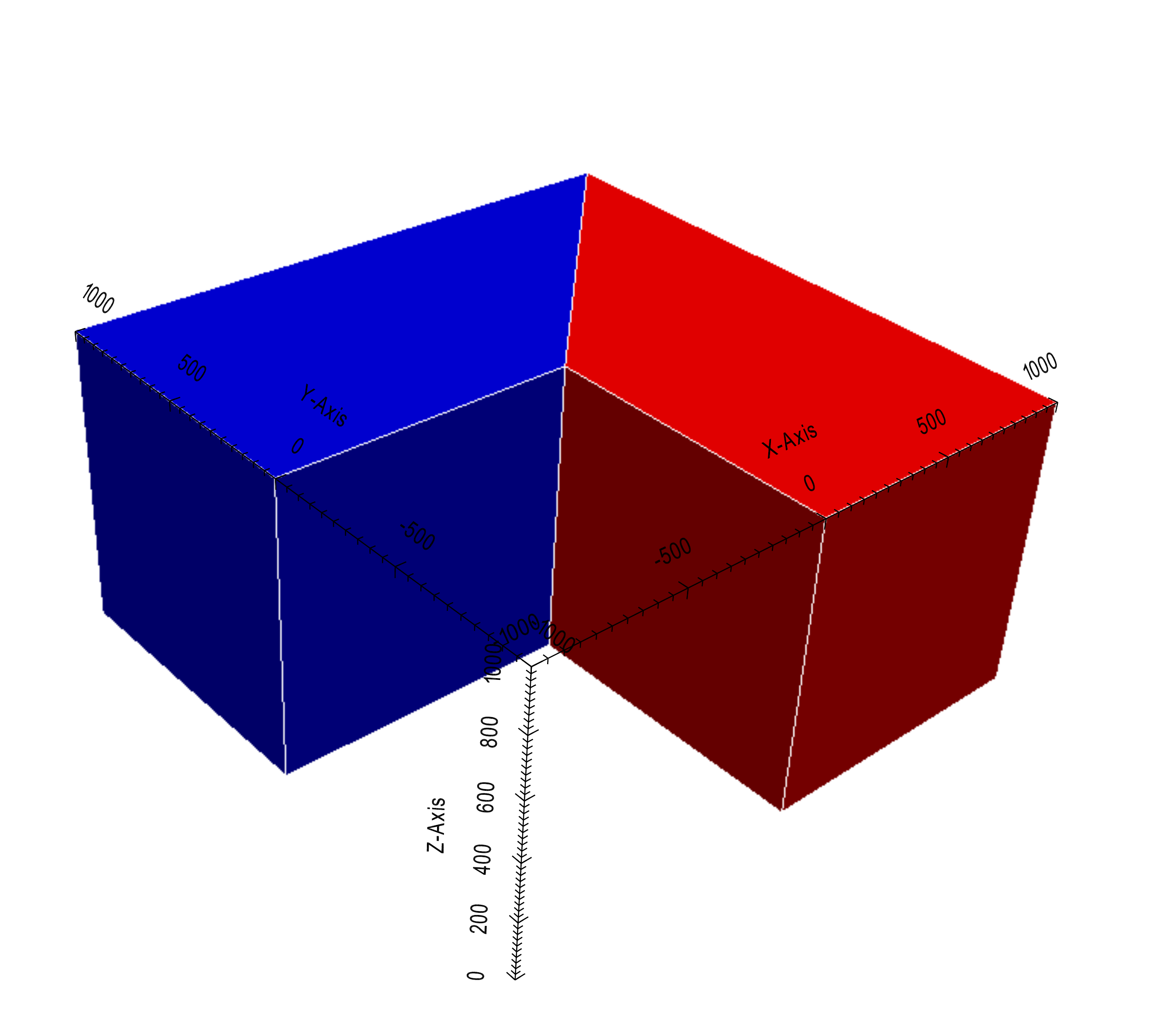}}
    \subfigure[]{ \includegraphics[width=0.32\textwidth, height=0.22\textheight]{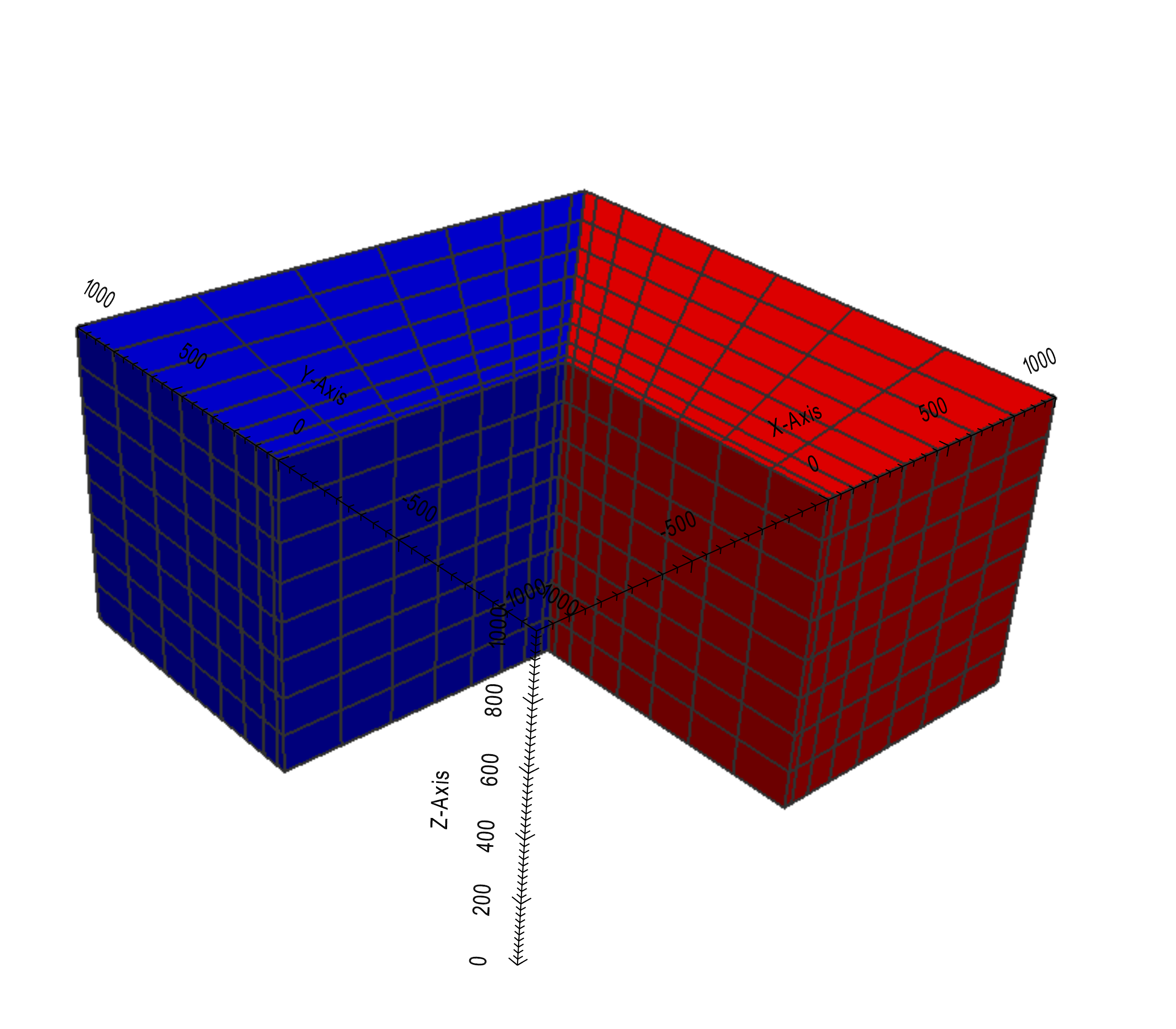}}
     \subfigure[]{\includegraphics[width=0.32\textwidth, height=0.22\textheight]{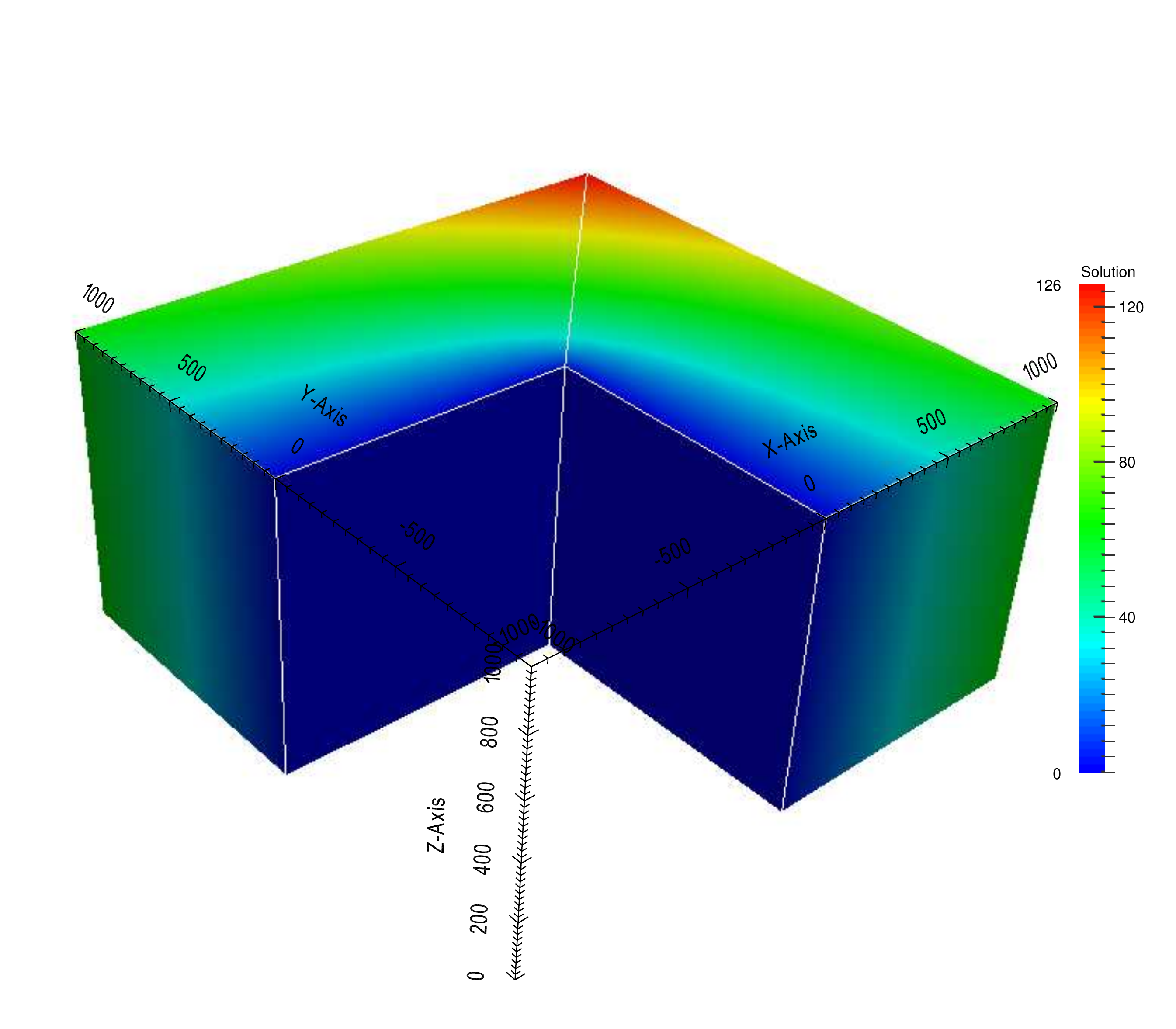}}
     \end{subfigmatrix}
	    \caption{3d L-shape test: (a) The domain $\Omega$ with the corners and the edge boundary singularities,
				      (b) The graded meshes of the two  subdomain,  
		                      (c) The contours of $u_h$.}
    \label{fig:sec:lshape3d}
    \end{figure}
    %---------------------------------------------------------------------
    
    We consider an exact solution 
    given by $u = r^{\lambda}\sin(\frac{\theta \pi}{\omega}),$ where $\lambda= \pi / \omega$ and $\omega= 3 \pi /  2$. 
    We set $\Gamma_D=\partial \Omega$. The data ${f}$ and ${u}_D$ of (\ref{0}) are given
    %specified
    by the   exact solution. 
    The computational domain $\Omega$ consists of two subdomains
    We have solved the problem using B-spline  spaces of order $k=1$ and $k=2$ using quasi-uniform and graded meshes
    in both subdomains.
    The grading parameter is defined by the relation $ \delta(l,p,d) /\mu =k$. 
    In Fig.~\ref{fig:sec:lshape3d}(b), we can see the 
    graded meshes for $\mu=0.6$. The contours of the  corresponding  approximate  solution $u_h$ computed
    for $k=1$ are presented in Fig.~\ref{fig:sec:lshape3d}(c).
    Table \ref{tab:sec:lshape3d} displays the convergence rates of the error. 
We observe the same behavior of the rates as in the previous examples. 
The rates of the uniform meshes are determined by the regularity of the solution 
    ($u\in W^{1+\lambda,p=2}(\Omega)$) for both B-spline  spaces. The convergence rates corresponding to graded meshes approach
    the optimal value. We remark here that the same  type of  graded meshes have also
    been used in finite element methods for approximating solutions of elliptic problems in three-dimensional domains
    with edges, see \cite{LMMT:ApelSandingWhiteman:1996,LMMT:ApelMilde1996,LMMT:Apel_NicaiseMatModelApplScin_1998}.
    %---------------------------------------------------------------------
    \begin{table}[htb!]
       \centering %
         \begin{tabular}{|c|l|l|l|l|} % creating eight columns
           \hline %inserting double-line
	         &\multicolumn{2}{|l|}{without grading }
	         &\multicolumn{2}{|l|}{with grading } \\ [0.5ex] \hline % inserts single-line
           $h /2^s$    &$k=1$& $k=2$&$\begin{matrix}
		        	              k=1,\\ \mu=0.6
				            \end{matrix}$  
				    &$\begin{matrix}
					  k=2, \\ \mu=0.3
				      \end{matrix}$ \quad
				    \\ \hline %\hline
	    \multicolumn{5}{|c|} {Convergence rates } \\ [0.5ex]  \hline
    $s=0$  &-        & -       & -        &-         \\ %\hline% Entering row contents
    $s=1$  &0.645078 &0.477178 &0.629909  &0.387338 {\ }   \\ %\hline
    $s=2$  &0.650805 &0.639951 &0.869128  &1.11198  {\ }   \\ %\hline
    $s=3$  &0.642971 &0.670841 &0.883655  &1.80531  {\ }   \\ %\hline
    $s=4$  &0.644107 &0.669949 &0.902467  &1.96533  {\ }   \\ %\hline  % [1ex] adds vertical space
    $s=5$  &0.648100 &0.668371 &0.920065  &2.00296  {\ }   \\ \hline
    %  $s=6$  &0.652510 &0.667156 &0.935479  &2.01035 {\ }   \\ \hline
    \end{tabular}
    \caption{3d L-shape : The convergence rates of the error with respect to the dG norm 
		  on  uniform and graded  meshes. }
    \label{tab:sec:lshape3d}
    \end{table}
    
    %----------------------------------------------------
%     \item \textbf
    \subsubsection{Three-dimensional heart shaped domain.}
In this example, we consider an exact solution given by 
    $u = r^{\lambda}\sin(\theta \pi/\omega),$ 
    where $\lambda= \pi /\omega $ and $\omega= 3\pi/2$. 
    We again set $\Gamma_D=\partial \Omega$, and the data ${f}$ and ${u}_D$ of (\ref{0}) are 
    specified by the  given exact solution. 
    The computational domain $\Omega$ consists of two subdomains.
    The problem is solved with B-spline spaces of order $k=1$ 
    and $k=2$ using quasi-uniform and graded meshes in both subdomains.
    The grading parameter is defined by the relation $\lambda / \mu = k$.
    In Fig. \ref{fig:sec:heart3d}(b), we can see the graded meshes for $\mu=0.6.$ 
    The contours of the  corresponding approximate  solution $u_h$ computed with
    degree $k=1$ is presented in Fig.~\ref{fig:sec:heart3d}(c).

    The convergence rates of the error corresponding to the quasi-uniform meshes 
    are shown in left columns (without grading) of Table~\ref{tab:sec:heart3d},
    and the rates corresponding to the graded meshes are shown in the right 
    columns of Table~\ref{tab:sec:heart3d}.    
    %--------------------------------------------------
    \begin{figure}[thb!]
    \begin{subfigmatrix}{3}% number of columns
     \subfigure[]{ \includegraphics[width=0.3\textwidth, height=0.22\textheight]{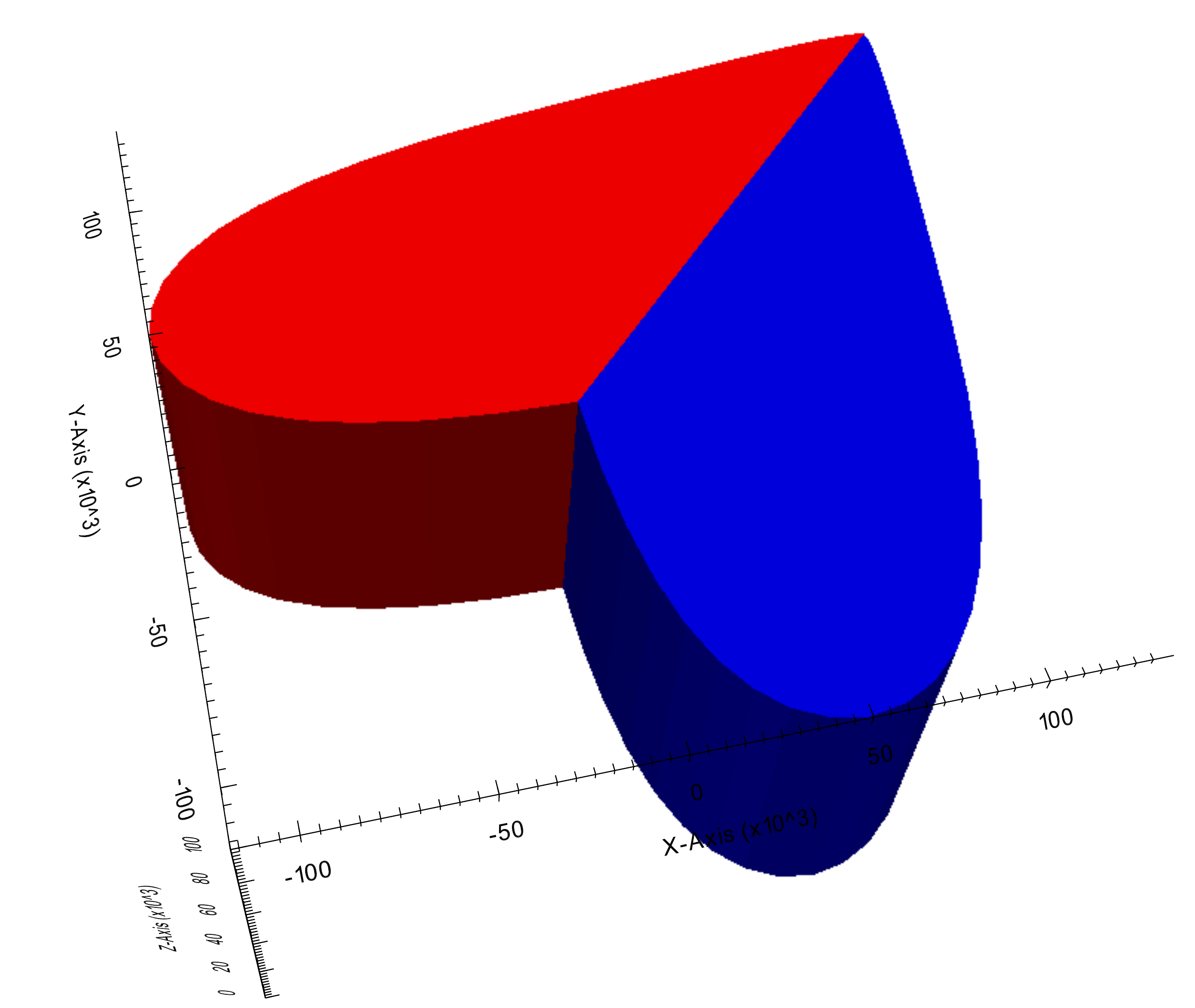}}
     \subfigure[]{ \includegraphics[width=0.3\textwidth, height=0.22\textheight]{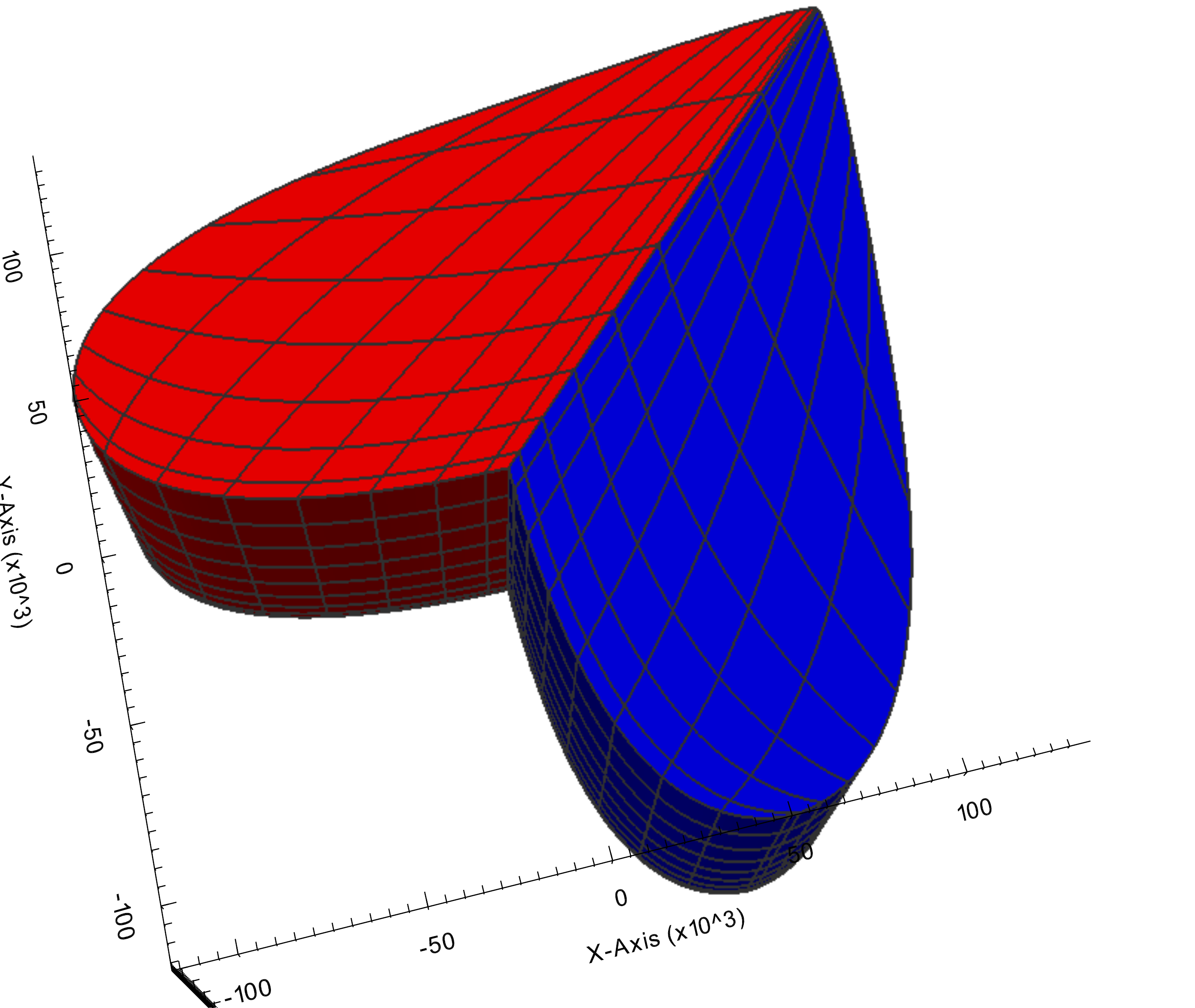}}
     \subfigure[]{ \includegraphics[width=0.3\textwidth, height=0.22\textheight]{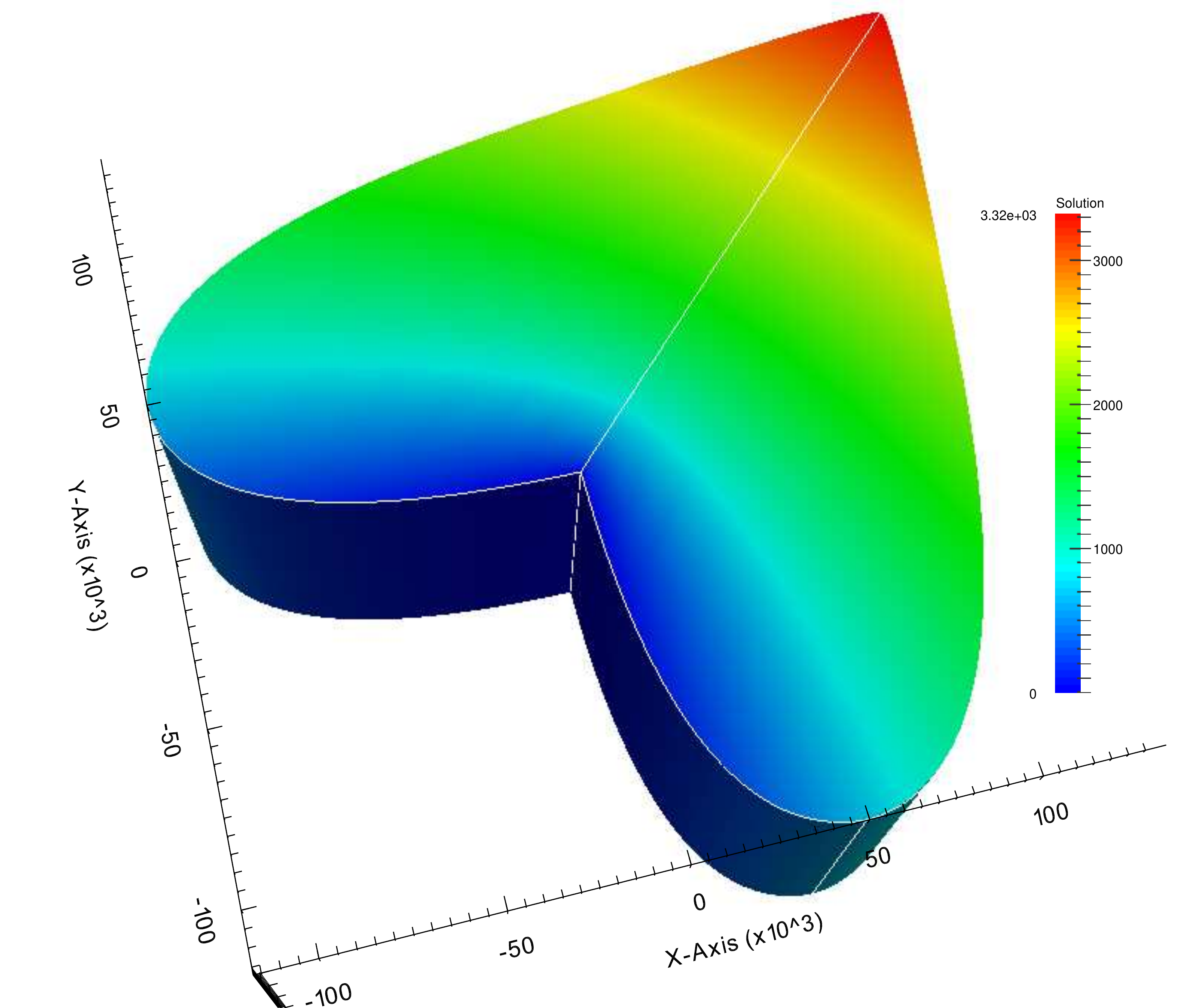}}
    \end{subfigmatrix}
	    \caption{3D heart test: (a) The domain $\Omega$ with the corners and edge boundary singularities,
				    (b) The graded meshes of the two  subdomain,  
				    (c) The contours of $u_h$.}
    \label{fig:sec:heart3d}
    \end{figure}    
    %--------------------------------------------------
    \begin{table}[htb!]
    \centering %
    \begin{tabular}{|c|l|l|l|l|} % creating eight columns
    \hline %inserting double-line
	      &\multicolumn{2}{|l|}{without grading }
	      &\multicolumn{2}{|l|}{with grading } \\ [0.5ex] \hline % inserts single-line
    $h /2^s$    &$k=1$& $k=2$&$\begin{matrix}
					  k=1,\\ \mu=0.6
				      \end{matrix}$  
				    &$\begin{matrix}
					  k=2, \\ \mu=0.3
				      \end{matrix}$ \quad
				    \\ \hline %\hline
	    \multicolumn{5}{|c|} {Convergence rates } \\ [0.5ex]  \hline
    $s=0$  &-        & -        & -        &-         \\ %\hline% Entering row contents
    $s=1$  &0.650805 &0.675611  &0.686633  &0.964287 {\ }   \\ %\hline
    $s=2$  &0.642971 &0.685756  &0.846524  &1.55143  {\ }   \\ %\hline
    $s=3$  &0.644107 &0.674337  &0.902119  &1.91781  {\ }   \\ %\hline
    $s=4$  &0.6481   &0.669817  &0.925762  &2.10561  {\ }   \\ %\hline  % [1ex] adds vertical space
    $s=5$  &0.65251  &0.667968  &0.94134   &2.09457  {\ }   \\ \hline
    %  $s=6$  &0.652510 &0.667156 &0.935479  &2.01035 {\ }   \\ \hline
    \end{tabular}
    \caption{3d Heart : The convergence rates of the error  with respect to the dG norm on  uniform and graded  meshes.}
    \label{tab:sec:heart3d}
    \end{table}
    
%     -----------------mozart kugel--------------------------------------
%     
%  (deg= 1)======================  FULL DG  ======|| deg = 2===========================          
%     Dofs    |  Iter \mu =1  | \mu = 0.6    || Dofs    |  Iter \mu =1  | \mu = 0.3    
%          16           3         3                 54          22          22
%          54          18        25                128          37          72
%         250          37        41                432          73         105
%        1458          47        45               2000          98         115
%        9826          81        87              11664         106         141
%       71874         177       183              78608         125         225
%      549250         373       371             574992         239         404
% 
%     -----------------kellogg's--------------------------------------
%  (deg= 1)======================  FULL DG  ======|| deg = 2===========================          
%        Dofs    |  Iter \mu =1  | \mu = 0.3    || Dofs    |  Iter \mu = 1  | \mu = 0.15    
%         16         12  
%         36         24  
%        100         52  
%        324         89  
%       1156        155 
%       4356        273 
%      16900        520 
%      66564        990  
    
% \end{enumerate}

%-------------------------------------------------------------
\section{Conclusion}
We have presented  mesh grading  techniques    for dG IgA discretizions of 
elliptic boundary value problems in the presence of 
so-called
singular points. 
%\textcolor{red}{
%Based on the real regularity of the exact solution
%around the singular points, 
%}
Based on the a priori or a posteriori knowledge 
of the behaviour of the exact solution around the singular points, 
we pre-defined the grading  of the mesh 
without increasing the knots but performing a relocation. 
The grading
refinement  has a subdomain (patch) character  in order to fit well into the IgA framework.
 Optimal error estimates of the 
%method
multipatch dG IgA method
have been shown when it is 
%applied
used
on the  graded meshes proposed. 
The theoretical results
have been confirmed 
%on 
by 
a number of 
%benchmark 
two- and three-dimensional test 
problems with known exact solutions.

\paragraph*{\bf Acknowledgment}
% \noindent\textbf{Acknowledgement. }
%This research was supported by the National Research Network 
%``Geometry \textbf{+} Simulation'' (NFN S117), 
%through the Austrian Science Fund (FWF).
%\textcolor{red}{Prof U. Langer please think if its necessary to mention other foundlings?????}
%
%The  authors would like to acknowledge the financial support of this work by  
%>>> WRONG: the Austrian Academic Foundation under the project FWF-NFN S11703-N23.
%
%The  authors would like to acknowledge the financial support by  
%the Austrian Science Fund (FWF) under the grant NFN S117-03.
%
This research was supported by the National Research Network NFN S117-03
``Geometry \textbf{+} Simulation'' of the Austrian Science Fund (FWF).

%/-----------------------------------------------------------

\bibliographystyle{abbrv} %Entries are ordered alphabetically; 
\bibliography{MeshGrading}

\end{document}